\theoremstyle{plain}
\newtheorem{theorem}[subsection]{Theorem}
\newtheorem{lemma}[subsection]{Lemma}
\newtheorem{corollary}[subsection]{Corollary}
\newtheorem{proposition}[subsection]{Proposition}
\theoremstyle{remark}
\newtheorem{example}[subsection]{Example}
\theoremstyle{definition}
\numberwithin{equation}{section}
\theoremstyle{remark}
\newtheorem{remark}[subsection]{Remark}
\theoremstyle{definition}
\title[On the period of Li, Pertusi and Zhao's symplectic variety]{On the period of Li, Pertusi and Zhao's symplectic variety}
\author{Franco Giovenzana}
\address[Franco Giovenzana]{Fakult\"at f\"ur Mathematik\\ Technische Universit\"at Chemnitz\\
Reichenhainer Stra\ss e 39, 09126 Chemnitz, Germany}
\email{franco.giovenzana@math.tu-chemnitz.de}
\author{Luca Giovenzana}
\address[Luca Giovenzana]{Mathematical Sciences, Loughborough University, Schofield Building, Epinal Way,
Loughborough,
Leicestershire,
LE11 3TU}
\email{L.Giovenzana@lboro.ac.uk}
\curraddr{Fakult\"at f\"ur Mathematik\\ Technische Universit\"at Chemnitz\\
Reichenhainer Stra\ss e 39, 09126 Chemnitz, Germany}
\email{luca.giovenzana@math.tu-chemnitz.de}
\author{Claudio Onorati}
\address[Claudio Onorati]{Dipartimento di Matematica, Universit\`a di Roma Tor Vergata, via della ricerca scientifica 1, 00133, Roma, Italia}
\email{onorati@mat.uniroma2.it}
\let\origmaketitle\maketitle
\def\maketitle{
  \begingroup
  \def\uppercasenonmath##1{} 
  \let\MakeUppercase\relax 
  \origmaketitle
  \endgroup
}
\newcommand{\Ktop}{{\operatorname{K_{\mathrm{top}}}}}
\newcommand{\sJ}{{\mathcal J}}
\newcommand{\sO}{{\mathcal O}}
\newcommand{\sU}{{\mathcal U}}
\newcommand{\C}{{\mathbb C}}
\renewcommand{\P}{{\mathbb P}}
\newcommand{\Q}{{\mathbb Q}}
\newcommand{\Z}{{\mathbb Z}}
\newcommand{\Db}{\operatorname{D}}
\DeclareMathOperator{\oH}{H}
\newcommand{\Pic}{\operatorname{Pic}}
\newcommand{\Stab}{{\operatorname{Stab}}}
\newcommand{\Sym}{{\rm Sym}}
\renewcommand{\to}[1][]{\xrightarrow{\ #1\ }}
\begin{document}

\begin{abstract}
We extend classical results of Perego and Rapagnetta on moduli spaces of sheaves of type OG10 to moduli spaces of Bridgeland semistable objects on the Kuznetsov component of a cubic fourfold. In particular, we determine the 
period of this class of varieties and use it to understand when they become birational to moduli spaces of sheaves on a K3 surface.
\end{abstract}

\subjclass[2010]{}
\keywords{Moduli spaces of Bridgeland semistable objects, cubic fourfolds, intermediate jacobian fibrations, irreducible holomorphic symplectic manifolds}

\maketitle

\tableofcontents

\section*{Introduction}\label{section introduction}

The theory of moduli spaces of sheaves on algebraic surfaces is arguably one of the most important and fruitful research areas of the last decades. In particular, moduli spaces of sheaves on K3 surfaces have been extensively studied in relation to the geometry of irreducible holomorphic symplectic manifolds, i.e.\ simply connected compact K\"ahler manifolds with a unique up to scalar non-degenerate holomorphic $2$-form.

Let $(S,H)$ be a polarised K3 surface, $v=v(F)$ be the Mukai vector of a coherent sheaf $F$ on $S$ and $M$ be the moduli space of Gieseker semistable sheaves on $S$ of Mukai vector $v$.
It is known that when $v$ is primitive, i.e.\ not of the form $v=kw$ with $k\neq\pm1$, and $v^2\geq-2$ and $H$ is generic with respect to $v$, then the moduli space is nonempty, irreducible, and symplectic (see \cite{Yoshioka:ModuliSpaces} for the final statement of a long list of works). In this case, any semistable sheaf is stable. On the other hand, if the Mukai vector is not primitive, then the singular locus of $M$ coincides with the strictly semistable locus, which may be nonempty.
By the seminal work of Mukai (\cite{Mukai}), the stable locus always carries a holomorphic symplectic form, but in general there are no symplectic resolutions of singularities. More precisely, if the polarisation is $v$-general, we have that (\cite{LehnSorger,KLS06}):
\begin{itemize}
\item if $v=2w$ with $w^2=2$, then there exists a symplectic resolution of singularities;
\item if $v=kw$ with $w^2\neq2$ or $w^2=2$ and $k\geq3$, then there are no symplectic resolutions of singularities.
\end{itemize}
In the former case, which we are interested in, the desingularisation $\widetilde{M}$ is an irreducible holomorphic symplectic manifold of type OG10. Its geometry is quite well understood nowadays, in particular its Hodge structure is abstractly described by Perego and Rapagnetta (\cite{PR:Factoriality}). We recall these works in Section~\ref{section:sheaves}.

The first aim of this paper is to generalise these results to the non-commutative case. Since Bridgeland's work on stability conditions (\cite{Bridgeland:Primo}), the realm of moduli spaces welcomed more general objects, namely moduli spaces parametrising semistable objects in the derived category of coherent sheaves on a K3 surface.
Even more generally, stability conditions have been recently constructed on some K3 categories (often called non-commutative K3 surfaces).
Examples of these are given by the Kuznetsov component of a smooth cubic fourfold.
This level of generality is the one we consider in this paper. The machinery to study these objects has been rigorously developed in \cite{BLNPS}, where the case of a primitive Mukai vector is comprehensively analysed.

Recently Li, Pertusi and Zhao studied moduli spaces of Bridgeland semistable objects in the K3 category of a cubic fourfold with a non primitive Mukai vector of the form $v=2w$, with $w^2=2$ (see \cite{PertusiCo} -- we recall some of their work in Section~\ref{section:BBF LPZ}). These varieties are called \emph{LPZ varieties} in the following. As in the classical case, they show that a symplectic resolution of singularities exists and the resulting manifold is an irreducible holomorphic symplectic manifold of type OG10. Paralleling the classical works of Perego and Rapagnetta (\cite{PR13,PR:Factoriality}), we determine the periods of both the singular moduli space and its desingularisation, see Proposition~\ref{prop:v perp} and Proposition~\ref{prop:Gamma}.

As a first corollary we get a Torelli-like statement that compares the birational geometry of LPZ varieties with a particular Mukai vector (see Example~\ref{example:LPZ}) with the geometry of the cubic fourfold. More precisely, in Theorem~\ref{thm:Torelli} we prove that the existence of a birational morphism between two such varieties, satisfying two additional assumptions, implies that the underlying cubic fourfolds are isomorphic.

As a second corollary we get the following analog of the main result of \cite{PR:Factoriality}:
\begin{center}
\emph{An LPZ variety is either locally factorial or 2-factorial.}
\end{center}
See Corollary~\ref{cor:factoriality} for the precise statement.

In the rest of the paper we use the results of Section~\ref{section:BBF LPZ} to investigate the following natural question.

\begin{center}
\underline{{\bf Q}}: 
When is an LPZ variety birational to a moduli space of sheaves on a (twisted) K3 surface?
\end{center}

We give a complete answer to this question in the non-twisted case in Theorem~\ref{thm:qualsiasi}. As expected, such a birational isomorphism exists as soon as the cubic fourfold has an associated K3 surface. When the K3 surface is twisted though, we need to rigidify our hypothesis, and we prove a similar statement under the assumption that the birational map is stratum preserving in Proposition~\ref{thm:main}. Here, a stratum preserving birational map is a birational map that is well-defined at the generic point of the singular locus of the first variety and maps it to the generic point of the singular locus of the second variety.

The same question for Fano varieties of lines of cubic fourfolds and for the so-called Lehn--Lehn--Sorger--van Straten symplectic eightfolds has been previously answered, by similar methods, in \cite{Addington:Fano, huybrechts-twisted} and \cite{AG:LLSvS, LPZ:LLSvS}. From this point of view, the present work can be thought as a natural continuation of the aforementioned works. Recently, similar results of the ones in the present paper appeared in \cite{FGG}.

Finally, in \cite[Theorem~1.3]{PertusiCo} it is proved that certain LPZ varieties are birational to an irreducible holomorphic symplectic manifold of type OG10 that compactifies the twisted intermediate jacobian fibration associated to a cubic fourfold (see \cite{Voisin:Twisted}). Using this remark, in Theorem~\ref{thm:LPZ is LSV} we give an answer to the following question.

\begin{center}
\underline{{\bf Q}}: 
When is an LPZ variety birational to an LSV variety?
\end{center}

An LSV variety is an irreducible holomorphic symplectic manifold of type OG10 that compactifies the intermediate jacobian fibration associated to a cubic fourfold (\cite{LSV,Sacca} -- see also Section~\ref{section:LSV}). 

\subsection*{Acknowledgments}
We thank Alessio Bottini, Christian Lehn, Emanuele Macr{\`{\i}}, Giovanni Mongardi, Benedetta Piroddi and Antonio Rapagnetta for very interesting and important remarks at different stages of this work. We also wish to thank the two anonymous referees for the many questions and suggestions that greatly improved this work.
Franco Giovenzana was supported by the DFG through the research grant Le 3093/3-2.
Luca Giovenzana was supported by Engineering and Physical Sciences
Research Council (EPSRC) New Investigator Award EP/V005545/1 "Mirror
Symmetry for Fibrations and Degenerations".
Claudio Onorati was supported by the PRIN grant CUP E84I19000500006.

\thispagestyle{empty}

\section{Moduli spaces of sheaves on a K3 surface}\label{section:sheaves}

\subsection{Classical theory}
Let $S$ be a projective K3 surface and $v\in\Ktop(S)$ be a positive Mukai vector (\cite[Definition~0.1]{Yoshioka:ModuliSpaces}). 
For a choice of an ample divisor $H$, we consider the moduli space $M_v(S,H)$ of Gieseker--Maruyama $H$-semistable sheaves on $S$ of class $v$. When $v$ is primitive and $H$ is chosen generic with respect to $v$ (\cite[Section~2.1]{PR13}), the space $M_v(S,H)$ is a non-empty smooth and projective variety deformation equivalent to a Hilbert scheme of points of a K3 surface (\cite[Theorem~8.1]{Yoshioka:ModuliSpaces}). 
Here we are interested in the case $v$ is non primitive as described in the following theorem.
\begin{theorem}[\protect{\cite{O'Grady99}\cite[Th\'eor\`eme~1.1]{LehnSorger}}]
Suppose that $v=2w$, where $w$ is primitive and $w^2=2$. If $H$ is generic with respect to $v$, then $M_v(S,H)$ is a non-empty projective singular symplectic variety of dimension 10. Moreover, there exists a symplectic desingularisation 
$$ \pi\colon \widetilde{M}_v(S,H)\longrightarrow M_v(S,H), $$
where $\widetilde{M}_v(S,H)$ is a smooth and projective irreducible holomorphic symplectic variety. Moreover, $\widetilde{M}_v(S,H)$ is obtained by blowing up $M_v(S,H)$ at the singular locus with reduced scheme structure.
\end{theorem}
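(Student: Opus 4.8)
The plan is to reduce the whole statement to a local analytic study of the singularities and then produce the resolution by a single blow-up, following O'Grady and Lehn--Sorger. First I would record the numerics: since $v=2w$ with $w^2=2$ we have $v^2=4w^2=8$, so the expected dimension is $v^2+2=10$, and non-emptiness follows from the general existence theory for Gieseker semistable sheaves on a K3 surface together with positivity of $v$. By Mukai's theorem the stable locus $M_v^{\mathrm{st}}\subseteq M_v(S,H)$ is smooth of dimension $10$ and carries a holomorphic symplectic form, so the first structural claim to establish is that the singular locus coincides with the strictly semistable locus $\Sigma:=M_v(S,H)\setminus M_v^{\mathrm{st}}$. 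Genericity of $H$ with respect to $v$ guarantees that strict semistability forces an S-equivalence class of the form $F_1\oplus F_2$ with each $F_i$ stable of Mukai vector $w$.

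Next I would describe $\Sigma$ and its own singularities explicitly. Since $w$ is primitive with $w^2=2$, the moduli space $M_w(S,H)$ is a smooth projective holomorphic symplectic fourfold, and sending $\{F_1,F_2\}\mapsto[F_1\oplus F_2]$ identifies $\Sigma\cong\Sym^2 M_w(S,H)$, an $8$-dimensional variety whose singular locus is the diagonal $\Omega\cong M_w(S,H)$, parametrising the polystable sheaves $G^{\oplus2}$ with $G$ stable. Thus $M_v(S,H)$ carries a two-step stratification $\Omega\subset\Sigma\subset M_v(S,H)$, and the resolution must be analysed transversally to each stratum.

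The heart of the argument is the \'etale-local model at a polystable point, obtained from Kuranishi deformation theory together with Luna's slice theorem for the underlying GIT quotient. For a stable $G$ with $v(G)=w$ one computes $\dim\Hom(G,G)=\dim\Ext^2(G,G)=1$ and $\dim\Ext^1(G,G)=2-\chi(G,G)=2+w^2=4$, and Serre duality endows $\Ext^1(G,G)\cong\C^4$ with a symplectic form. Along the open stratum $\Sigma\setminus\Omega$, where $\Aut(F_1\oplus F_2)=(\C^\ast)^2$, the transverse slice is the reduction of $\Ext^1(F_1,F_2)\oplus\Ext^1(F_2,F_1)\cong\C^2\oplus\C^2$ by the residual $\C^\ast$, namely a transverse $A_1$-singularity $\C^2/\{\pm1\}$. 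At a point of $\Omega$ the slice transverse to $\Omega$ is the affine symplectic variety $\mu^{-1}(0)/\!\!/\mathrm{SL}_2$, where $\mathrm{SL}_2\subset\GL_2=\Aut(G^{\oplus2})$ acts by conjugation on the second factor of $\Ext^1(G,G)\otimes\mathfrak{sl}_2\cong\C^4\otimes\mathfrak{sl}_2$ and $\mu$ is the quadratic moment map induced by the Yoneda product into $\Ext^2$; this is the $6$-dimensional O'Grady singularity whose detailed analysis is due to Lehn--Sorger.

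Finally I would blow up the reduced singular locus $\Sigma$ and check on these local models that the result is smooth and that the symplectic form on $M_v^{\mathrm{st}}$ extends to a nondegenerate closed two-form on $\widetilde{M}_v(S,H)$, so that $\pi$ is a symplectic, hence crepant, resolution; irreducibility, simple connectedness and the fact that $\widetilde M_v(S,H)$ realises a new deformation type then identify it as an irreducible holomorphic symplectic manifold of type OG10. The main obstacle is precisely the local computation at $\Omega$: one must verify that a single blow-up of $\Sigma$ with reduced structure---rather than O'Grady's original iterated sequence of blow-ups followed by a blow-down---already yields a smooth space on which the symplectic form remains nondegenerate, and this rests on the precise geometry of the quiver-type variety $\mu^{-1}(0)/\!\!/\mathrm{SL}_2$ and on the behaviour of its blow-up along the singular stratum.
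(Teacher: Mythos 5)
This theorem is quoted in the paper from O'Grady and from Lehn--Sorger's Th\'eor\`eme~1.1, with no proof supplied, so there is no internal argument to compare against; your outline is a faithful reconstruction of the strategy of those cited references: Mukai's symplectic form on the stable locus, the stratification $\Omega\subset\Sigma\subset M_v(S,H)$ with $\Sigma\cong\Sym^2 M_w(S,H)$, the Kuranishi/Luna-slice local models (the transverse $A_1$-singularity along $\Sigma\setminus\Omega$, understood as the symplectic reduction so that the slice is $2$-dimensional, and the six-dimensional quotient $\mu^{-1}(0)/\!\!/\mathrm{SL}_2$ at $\Omega$), and the verification that a single blow-up of the reduced singular locus is already a symplectic resolution. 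Your numerics check out, and you correctly isolate the Lehn--Sorger local analysis at $\Omega$ as the one genuinely hard step that the sketch necessarily leaves to the literature.
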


Any irreducible symplectic manifold that is deformation equivalent to $\widetilde{M}_v(S,H)$ is said to be of type OG10. Any singular symplectic variety that is locally trivially deformation equivalent to $M_v(S,H)$ is said to be singular of type OG10. (This makes sense since, by \cite[Theorem~1.6]{PR13}, the moduli spaces and the respective desingularisations obtained in the theorem are all deformation equivalent to each other.)

Recall that $\Ktop(S)$ is a unimodular lattice of rank $24$, where the pairing is given by the opposite of the Euler pairing. For $v\in\Ktop(S)$ we denote by $v^\perp$ the sublattice of $\Ktop(S)$ of vectors that are orthogonal to $v$. 
Recall also that $\Ktop(S)\cong\oH^{\operatorname{even}}(S,\Z)$, and it comes with a pure Hodge structure of weight 2 obtained by declaring 
\[
\oH^{\operatorname{even}}(S,\C)^{2,0}:=\oH^{2,0}(S),\qquad \oH^{\operatorname{even}}(S,\C)^{0,2}:=\oH^{0,2}(S) \]
and
\[\oH^{\operatorname{even}}(S,\C)^{1,1}:=\oH^{0}(S) \oplus\oH^{1,1}(S)\oplus \oH^4(S).
\]
In particular, $v^\perp$ inherits a Hodge structure of weight 2 as well. 

Notice also that the free $\Z$-module $\oH^{\operatorname{even}}(S,\Z)$ inherits a lattice structure from $\Ktop(S)$, and from now on we use the more common notation $\widetilde{\oH}(S,\Z)$
in order to explicit this lattice structure.

Finally, we say that two lattices, both with a Hodge structure, are \emph{Hodge-isometric} if there exists an isometry that is also an isomorphism of Hodge structures.

\begin{proposition}[\protect{\cite[Theorem~1.7]{PR13},\cite[Theorem~3.1]{PR:Factoriality}}]\label{prop:PR}
Let $S$, $H$ and $v$ be as in the theorem above, and let $\pi\colon \widetilde{M}_v(S,H)\to M_v(S,H)$ be the desingularisation morphism.
\begin{enumerate}
    \item The pullback 
    $$\pi^*\colon\oH^2(M_v(S,H),\Z)\longrightarrow\oH^2(\widetilde{M}_v(S,H),\Z)$$
    is injective. In particular, $\oH^2(M_v(S,H),\Z)$ has a pure Hodge structure of weight two and inherits a non-degenerate lattice structure. 
    \item The lattice $\oH^2(M_v(S,H),\Z)$ is Hodge-isometric to the lattice $v^\perp$.
    \item The Beauville--Bogomolov--Fujiki lattice $\oH^2(\widetilde{M}_v(S,H),\Z)$ is Hodge-isometric to the lattice
    \begin{equation}\label{eqn:Gamma} 
    \Gamma_v=\left\{\left(x,k\frac{\sigma}{2}\right)\in (v^\perp)^*\oplus\Z\frac{\sigma}{2}\mid k\in2\Z\Leftrightarrow x\in v^\perp\right\}, 
    \end{equation}
     where $\sigma^2=-6$ and corresponds to the class of the exceptional divisor of $\pi$. Here, the Hodge structure of $\Gamma_v$ is defined by the Hodge structure on $v^\perp$ and by declaring the class $\sigma$ to be of type $(1,1)$.
\end{enumerate}
\end{proposition}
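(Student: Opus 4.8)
The plan is to prove the three statements as a single package, with the Donaldson--Mukai homomorphism as the main engine for the Hodge-isometry assertions and a local study of O'Grady's resolution supplying the numerics ($\sigma^2=-6$ and the gluing). For part (1) I would start from the facts contained in the cited desingularisation theorem: $M_v:=M_v(S,H)$ is a normal projective variety with symplectic, hence rational, singularities, its smooth locus is the stable locus $M_v^s$, and its singular locus is the strictly semistable locus $\Sigma$, of complex codimension two. Since $\pi$ resolves a variety with rational singularities, $\pi_*\sO_{\widetilde M_v}=\sO_{M_v}$, and rational injectivity of $\pi^*$ is immediate from the decomposition theorem, which exhibits $\Q_{M_v}$ as a direct summand of $R\pi_*\Q_{\widetilde M_v}$; the integral statement then follows from the connectedness of the fibres of $\pi$ together with the torsion-freeness of $\oH^2(\widetilde M_v,\Z)$. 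The pure weight-two Hodge structure and lattice structure on $\oH^2(M_v,\Z)$ are the ones transported from the irreducible holomorphic symplectic manifold $\widetilde M_v$ along $\pi^*$; non-degeneracy holds because $\pi^*\oH^2(M_v,\Q)$ is the Beauville--Bogomolov--Fujiki orthogonal complement of the exceptional class $\sigma$, on which the form is non-degenerate as $\sigma^2\neq0$.

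For part (2), the key object is the Donaldson--Mukai homomorphism $\lambda_v\colon v^\perp\to\oH^2(M_v,\Z)$, built from a quasi-universal family via determinant line bundles; the determinantal construction descends from the Quot scheme to the whole moduli space, not only to $M_v^s$, so that $\lambda_v$ lands directly in $\Pic(M_v)\subseteq\oH^2(M_v,\Z)$ and is manifestly a morphism of Hodge structures. The content is to show it is a Hodge isometry. I would verify injectivity and the isometry property, matching the Mukai pairing on $v^\perp$ with the form induced in part (1) (computed via the Fujiki relation on $\widetilde M_v$), on the stable locus, where classical Donaldson--Mukai theory applies, and then propagate across $\Sigma$ using its codimension-two bound. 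Surjectivity, i.e.\ that the image is the full saturated lattice, I would obtain from the local triviality of the deformations in this class: since all such $(S,H,v)$ are locally trivially deformation equivalent and $\lambda_v$ is compatible with parallel transport, it suffices to check the cokernel on one convenient model, such as O'Grady's original $v=(2,0,-2)$.

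For part (3) I would combine part (2) with the geometry of the exceptional divisor. Rationally $\oH^2(\widetilde M_v,\Q)=\pi^*\oH^2(M_v,\Q)\oplus\Q\sigma$, and by part (2) the first summand is $v^\perp\otimes\Q$; thus $\pi^*v^\perp\oplus\Z\sigma\subseteq\oH^2(\widetilde M_v,\Z)$ is a finite-index sublattice, where $v^\perp=w^\perp$ has discriminant group $\Z/2$ (as $v=2w$ with $w$ primitive and $w^2=2$). Two numerical facts then pin down the overlattice. First, $\sigma^2=-6$, obtained by analysing the structure of $\Exc(\pi)$ over the two strata of $\Sigma$ (the deepest stratum $\Omega\cong M_w$ and its complement) and evaluating the Beauville--Bogomolov--Fujiki form on the resulting divisor class. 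Second, a local analysis exhibits an integral class on $\widetilde M_v$ equal to the half-sum of $\sigma$ and the pullback of a generator of the order-two part of the discriminant group of $v^\perp$; this fixes the gluing between the $\Z/2$ in the discriminant of $v^\perp$ and the order-two element $\tfrac{\sigma}{2}$ in the discriminant of $\Z\sigma$, and yields exactly $\Gamma_v$. A discriminant count ($\operatorname{disc}(v^\perp\oplus\Z\sigma)=12$, index $2$, so $\operatorname{disc}\Gamma_v=3$) confirms that $\Gamma_v$ carries the discriminant of the OG10 lattice, so no further enlargement occurs.

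The crux is part (3): both the value $\sigma^2=-6$ and the determination of the gluing class require a genuine local study of the symplectic resolution of the OG10 singularity along its two strata, and that is where I expect the real work to lie. The Hodge-theoretic statements in parts (1) and (2) are comparatively formal once the Donaldson--Mukai map and the codimension-two bound are in place.
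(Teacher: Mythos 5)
The paper offers no proof of this proposition: it is quoted directly from Perego--Rapagnetta, so your sketch has to be measured against the arguments of \cite{PR13} and \cite{PR:Factoriality} (and \cite{Rapagnetta}). Measured that way, your strategy is essentially theirs. For (2) you use the Le Potier/Donaldson--Mukai morphism (which indeed descends to all of $M_v$ on algebraic classes because $v^\perp=w^\perp$ kills the obstruction at polystable points $F_1\oplus F_2$), check the isometry on the stable locus, and obtain surjectivity by transporting along the locally trivial deformations of \cite{PR13} to O'Grady's model $v=(2,0,-2)$; for (3) you invoke Rapagnetta's computation $\sigma^2=-6$ and the explicit integral class generating the index-two overlattice of $v^\perp\oplus\Z\sigma$, and your discriminant count ($12$, index $2$, hence $3$) is the correct consistency check against the known OG10 lattice. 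This is the actual architecture of the cited proofs. A small imprecision in (2): the determinant-line-bundle construction only produces the map on the algebraic part of $v^\perp$; on the full rank-$23$ lattice the morphism is the slant product with $\operatorname{ch}$ of a quasi-universal family, which a priori lives on $\oH^2(M_v^s,\Q)$ and must be carried across the codimension-two singular locus, exactly as the paper does later for $\theta_\lambda$.

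The one step that does not work as written is the integral injectivity in part (1). Connectedness of the fibres of $\pi$ gives $\pi_*\Z=\Z$, but the Leray spectral sequence then identifies the kernel of $\pi^*$ on $\oH^2(M_v,\Z)$ with the image of the differential $d_2\colon\oH^0(M_v,R^1\pi_*\Z)\to\oH^2(M_v,\Z)$; torsion-freeness of $\oH^2(\widetilde{M}_v(S,H),\Z)$ says nothing about this kernel. What you actually need is $R^1\pi_*\Z=0$, i.e.\ the vanishing of $\oH^1$ of the fibres of the O'Grady resolution (a $\P^1$ over the generic stratum of $\Sym^2M_w$, and a simply connected fibre over the deepest stratum). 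This is precisely what \cite[Lemma~2.1, Lemma~3.5]{BakkerLehn:Resolutions} supplies, and it is the reason the paper's Remark~\ref{rmk:H^2 singular} cites Bakker--Lehn for item (1). Once that vanishing is in place, the rest of your outline goes through.
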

\begin{remark}\label{rmk:H^2 singular}
Note that item (1) holds true for any singular symplectic variety by the work of Bakker and Lehn (see \cite[Lemma~2.1, Lemma~3.5]{BakkerLehn:Resolutions} and, more generally, \cite[Section~5]{BakkerLehn:Global}). 
\end{remark}

\subsection{Moduli of objects in the derived category}
As before, let $S$ be a projective K3 surface and let $\Db^b(S)$ be its derived category of coherent sheaves. By \cite[Theorem~1.2]{Bridgeland:Primo} there exists a complex manifold $\Stab(S)$ of Bridgeland stability conditions on $\Db^b(S)$ and, as customary, we denote by $\Stab^\dag(S)$ the distinguished connected component containing geometric stability conditions (\cite[Theorem~1.1]{Bridgeland}). If $v\in\Ktop(S)$ is a Mukai vector, then $\Stab^\dag(S)$ is decomposed in walls and chambers with respect to $v$, and we say that a stability condition is \emph{generic} if it belongs to one of the (open) chambers. By \cite[Theorem~2.15]{BM:MMP} and \cite[Theorem~1.3]{BM:Projectivity}, if $\tau\in\Stab^\dag(S)$ is generic, then there exists a non-empty coarse moduli space $M_v(S,\tau)$, parametrising S-equivalence classes of objects that are semistable with respect to $\tau$. Moreover, $M_v(S,\tau)$ is a normal projective and irreducible variety with $\Q$-factorial singularities, and if $v$ is primitive, then $M_v(S,\tau)$ is smooth and deformation equivalent to a Hilbert scheme of points on $S$.

\begin{theorem}[\protect{\cite[Proposition~2.2, Corollary~3.16]{MZ}}]
Let $\tau\in\Stab^\dag(S)$ be generic and $v=2w$, with $w$ primitive and $w^2=2$. Then $M_v(S,\tau)$ is singular, the singular locus being the locus of strictly semistable objects, and there exists a symplectic resolution of singularities
$$\pi\colon\widetilde{M}_v(S,\tau)\longrightarrow M_v(S,\tau).$$
Moreover $\widetilde{M}_v(S,\tau)$ is a projective irreducible holomorphic symplectic manifold of type OG10.
\end{theorem}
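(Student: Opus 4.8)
The plan is to deduce the three assertions — singularity with the stated singular locus, existence of a symplectic resolution, and OG10 deformation type — by combining the general structural results of Bayer and Macr\`i (quoted above) with the already-established Gieseker-stability theorem, reducing everything to a local model that is insensitive to the choice between Gieseker and Bridgeland stability.

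First I would settle the singular locus. Since $w$ is primitive with $w^2=2\ge -2$, for generic $\tau$ the moduli space $M_w(S,\tau)$ is a nonempty smooth projective fourfold, and any object S-equivalent to a direct sum $E_1\oplus E_2$ with each $E_i$ $\tau$-stable of class $w$ is strictly $\tau$-semistable; thus the strictly semistable locus is nonempty and is set-theoretically the image of $\Sym^2 M_w(S,\tau)$. By the general theory quoted above, $M_v(S,\tau)$ is normal, projective, irreducible and $\Q$-factorial. Mukai's argument shows the stable locus is smooth and carries a holomorphic symplectic form, so it suffices to show $M_v(S,\tau)$ is genuinely singular exactly along the strictly semistable locus; this will follow from the local analysis below.

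Next, for the symplectic resolution, the key observation is that the analytic germ of $M_v(S,\tau)$ at a polystable point $E=E_1\oplus E_2$ depends only on the graded Yoneda algebra $\Ext^\bullet(E,E)$ computed in $\Db^b(S)$: via the Kuranishi description it is the germ at the origin of a GIT quotient of the quadratic cone $\{\kappa=0\}\subset \Ext^1(E,E)$ by the stabiliser of $E$. Because this Ext-algebra is intrinsic to the object in the derived category and does not see which stability notion is used, the germ is precisely the one analysed by O'Grady and Lehn--Sorger in the Gieseker case. Their computation shows that blowing up the reduced singular locus produces a symplectic resolution; combined with the existence of the symplectic form on the smooth locus, this gives a global symplectic resolution $\pi\colon\widetilde{M}_v(S,\tau)\to M_v(S,\tau)$ by blow-up of the reduced singular locus, smooth and projective. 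In particular the germ is singular, pinning down the singular locus as claimed.

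The main obstacle is the final assertion, that $\widetilde{M}_v(S,\tau)$ is deformation equivalent to O'Grady's ten-dimensional example. Here one cannot simply move $\tau$ into a Gieseker chamber and read off an isomorphism, because wall-crossing changes the moduli space discontinuously. The approach I would take is to construct a family — either over a path in $\Stab^\dag(S)$ crossing walls transversally, or over a family of K3 surfaces and Mukai vectors — relating $M_v(S,\tau)$ to a Gieseker moduli space $M_v(S,H)$, and to prove that the total space is a \emph{locally trivial} family of singular symplectic varieties, so that the fibrewise blow-ups of the singular strata assemble into a smooth family of resolutions. Establishing local triviality along the singular strata across walls is the crux of the argument, and it again rests on the uniformity of the local models established above. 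Granting this, the resolutions are deformation equivalent, and the deformation-equivalence statement of Perego and Rapagnetta quoted above identifies the Gieseker resolution as being of type OG10; hence so is $\widetilde{M}_v(S,\tau)$. I expect the delicate point to be precisely this control of the singular variety and its resolution in families as the stability condition varies, rather than the resolution analysis itself, which is essentially O'Grady's and Lehn--Sorger's.
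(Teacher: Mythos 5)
You should first note that the paper does not prove this statement at all: it is quoted from \cite[Proposition~2.2, Corollary~3.16]{MZ}, and the argument behind it is only rehearsed later, in the proof of Proposition~\ref{prop:Gamma MZ}. That argument is shorter than yours and sidesteps both points you flag as delicate: by \cite[Lemma~7.3]{BM:Projectivity} there is a Fourier--Mukai equivalence identifying $M_v(S,\tau)$ with a moduli space $M_{\tilde v}(S',\alpha,H)$ of $H$-Gieseker semistable $\alpha$-twisted sheaves on a (possibly different) twisted K3 surface, with $\tilde v=2\tilde w$ and $\tilde w^2=2$. No wall is crossed: one changes the surface rather than moving $\tau$ into a Gieseker chamber on $S$ itself. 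Since twisted moduli spaces are GIT quotients with the same local structure as untwisted ones, the O'Grady/Lehn--Sorger resolution analysis and the Perego--Rapagnetta deformation-equivalence results apply verbatim and yield all three assertions.

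Measured against this, your proposal has two genuine gaps. First, in the local analysis you assert that the analytic germ of $M_v(S,\tau)$ at a polystable point $E=E_1\oplus E_2$ ``depends only on the graded Yoneda algebra $\Ext^\bullet(E,E)$''. That is not automatic: the Kuranishi germ is controlled by the dg-algebra $\RHom(E,E)$ up to $L_\infty$-quasi-isomorphism, and reducing it to the quadratic cone cut out by the Yoneda square requires \emph{formality} of that dg-algebra. For Gieseker-semistable sheaves this is exactly what Kaledin--Lehn and Lehn--Sorger establish; for general Bridgeland-semistable objects it is a separate, nontrivial theorem that you would have to invoke explicitly. Second, and more seriously, your treatment of the deformation type is a plan rather than a proof: you identify ``establishing local triviality along the singular strata across walls'' as the crux and then grant it. Nothing in the proposal shows that a path in $\Stab^\dag(S)$ produces a locally trivial family of singular symplectic varieties, and in general it does not --- crossing a wall induces the birational surgeries of \cite{BM:MMP}, which change the moduli space. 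The Fourier--Mukai identification above is the standard device for avoiding precisely this issue; without it (or without the formality input together with an actual family construction), your argument does not close.
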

Under the genericity assumption of the stability condition, the singular locus of $M_v(S,\tau)$ is isomorphic to $\operatorname{Sym}^2M_w(S,\tau)$. (This classical result is usually referred to \cite[Lemma~1.1.5]{O'Grady99}, by using the tools developed in the proof of \cite[Theorem~1.3]{BM:Projectivity}.)

As before, let us describe the second cohomology groups of the singular and the smooth moduli spaces. The following result is due to Meachan and Zhang, we only prove the third statement, since it is not written anywhere, but all the tools are contained in \cite{MZ}.

\begin{proposition}[\protect{\cite[Theorem~2.7]{MZ}}]\label{prop:Gamma MZ}
Let $S$, $v=2w$ and $\tau$ be as above, and let $\pi\colon \widetilde{M}_v(S,\tau)\to M_v(S,\tau)$ be the desingularisation morphism.
\begin{enumerate}
    \item The pullback 
    $$\pi^*\colon\oH^2(M_v(S,\tau),\Z)\longrightarrow\oH^2(\widetilde{M}_v(S,\tau),\Z)$$
    is injective. In particular, $\oH^2(M_v(S,\tau),\Z)$ has a pure Hodge structure of weight two and inherits a non-degenerate lattice structure. 
    \item The Mukai morphism defines a Hodge isometry
    \[
    \theta_v\colon v^\perp \to \oH^2(M_v(S,\tau),\Z)
    \]
    that is invariant under Fourier--Mukai equivalences.
    \item The Beauville--Bogomolov--Fujiki lattice $\oH^2(\widetilde{M}_v(S,\tau),\Z)$ is Hodge-isometric to the lattice $\Gamma_v$ defined in (\ref{eqn:Gamma}).
\end{enumerate}
\end{proposition}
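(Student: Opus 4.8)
The plan is to follow the strategy of Perego and Rapagnetta (\cite{PR13,PR:Factoriality}) underlying Proposition~\ref{prop:PR}(3), replacing their geometric input with the tools of \cite{MZ}, and to reduce the identification to a short lattice-theoretic computation. Throughout, I identify $\oH^2(M_v(S,\tau),\Z)$ with $v^\perp$ by means of the Hodge isometry $\theta_v$ of item (2), and I regard $v^\perp$ as a sublattice of $\oH^2(\widetilde{M}_v(S,\tau),\Z)$ via the injection $\pi^*$ of item (1), which is an isometric embedding onto its image.

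First I would produce the class $\sigma$. Since $\widetilde{M}_v(S,\tau)$ is of type OG10 its second cohomology has rank $24$, while $v^\perp$ has rank $23$; hence the saturation of the orthogonal complement of $\pi^*v^\perp$ is generated by a single primitive class $\sigma$. As $\sigma$ is orthogonal to the pullback of all of $\oH^2(M_v(S,\tau),\Z)$, it is supported on the exceptional divisor $E$ of $\pi$ and is therefore algebraic, hence of Hodge type $(1,1)$; concretely $\sigma$ is the primitive class proportional to $[E]$. The one genuinely geometric point is the computation $\sigma^2=-6$ for the Beauville--Bogomolov--Fujiki form: this is read off from the local model of the symplectic resolution along the generic point of the singular locus $\operatorname{Sym}^2 M_w(S,\tau)$, exactly as in \cite{PR13,PR:Factoriality}, the relevant local triviality and the description of $\pi$ as the blow-up of the reduced singular locus being supplied by \cite{MZ}. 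I expect this self-intersection computation, together with the identification of $\sigma$ with a multiple of the exceptional class, to be the main obstacle, as everything else is formal.

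Granting $\sigma^2=-6$, the remaining argument is lattice theory. Recall $v^\perp=w^\perp$ with $w$ primitive and $w^2=2$ in the unimodular lattice $\widetilde{\oH}(S,\Z)$, so $(v^\perp)^*/v^\perp\cong\Z/2\Z$, while $(\Z\sigma)^*/\Z\sigma\cong\Z/6\Z$. Thus $v^\perp\oplus\Z\sigma$ is an even sublattice of $\oH^2(\widetilde{M}_v(S,\tau),\Z)$ whose discriminant group has order $12$, and $\oH^2(\widetilde{M}_v(S,\tau),\Z)$ is an even overlattice of it. Such overlattices correspond to isotropic subgroups of the discriminant form on $\Z/2\Z\oplus\Z/6\Z$; a direct check shows that the only nonzero isotropic element is the class of $\left(x_0,\tfrac{\sigma}{2}\right)$, where $x_0$ represents the nontrivial class of $(v^\perp)^*/v^\perp$, and it generates a subgroup of order $2$. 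Hence there are exactly two even overlattices: the split one $v^\perp\oplus\Z\sigma$, of discriminant order $12$, and the index-two overlattice obtained by adjoining $\left(x_0,\tfrac{\sigma}{2}\right)$, which is precisely $\Gamma_v$ as defined in (\ref{eqn:Gamma}) and has discriminant group $\Z/3\Z$.

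To select the correct one, I would invoke that the Beauville--Bogomolov--Fujiki lattice of OG10 has discriminant group $\Z/3\Z$, which is deformation invariant and is recalled in Proposition~\ref{prop:PR}(3). This rules out the split overlattice and forces $\oH^2(\widetilde{M}_v(S,\tau),\Z)\cong\Gamma_v$ as lattices. Finally, the isometry is a Hodge isometry: on the summand $v^\perp$ it is the Hodge isometry $\theta_v$ of item (2), and $\sigma$ is sent to $\sigma$, which is of type $(1,1)$ on both sides; since the glue class $\left(x_0,\tfrac{\sigma}{2}\right)$ is then integral and uniquely determined, the Hodge structures agree and the proof is complete.
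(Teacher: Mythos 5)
Your lattice-theoretic endgame is correct and genuinely different from the paper's: granting that $\pi^*v^\perp$ sits isometrically inside $\oH^2(\widetilde{M}_v(S,\tau),\Z)$ with rank-one orthogonal complement $\Z\sigma$ and that $\sigma^2=-6$, the discriminant-form count does single out $\Gamma_v$ among the two even overlattices of $v^\perp\oplus\Z\sigma$ (the unique nonzero isotropic element of $\Z/2\oplus\Z/6$ for the forms $\langle -1/2\rangle\oplus\langle -1/6\rangle$ is indeed the class of $(x_0,\sigma/2)$, and the discriminant group $\Z/3$ of the OG10 lattice forces the nonsplit choice). The paper instead writes down the map $f_{\tilde v}$ explicitly and transports the whole isometry from the untwisted Perego--Rapagnetta case along a locally trivial deformation; your version is more economical at this stage and, as a bonus, does not need the divisibility of the exceptional class as an input, since it falls out of $\Gamma_v$.

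The gap is in your ``one genuinely geometric point'': the claim that $\sigma^2=-6$ can be read off from the local model of the resolution along the generic point of $\operatorname{Sym}^2M_w(S,\tau)$ does not hold up. The Beauville--Bogomolov--Fujiki square of the exceptional divisor is not a local invariant --- it is defined through the global Fujiki relation --- and in \cite{Rapagnetta} it is computed by a global argument on O'Grady's specific moduli space, not from the transversal singularity type. In the Bridgeland setting the paper obtains this number precisely by the mechanism your sketch tries to bypass: pass to twisted sheaves via \cite[Lemma~7.3]{BM:Projectivity}, deform locally trivially to a Gieseker moduli space over a curve $B$ using \cite{MZ}, observe that the exceptional classes form a flat section of $R^2\tilde{p}_*\Z$ for the Gauss--Manin connection, and quote the untwisted computation. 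So your proof closes only after you replace the ``local model'' justification by this deformation step --- at which point you have reproduced the main body of the paper's argument, with your overlattice analysis serving as an alternative way to finish from there.
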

\begin{proof}
The only statement that is not contained in \cite[Theorem~2.7]{MZ} is the last one: its proof is essentially implicit in loc.\ cit., so we quickly recall the main ideas. By \cite[Lemma~7.3]{BM:Projectivity} the moduli space $M_v(S,\tau)$ is isomorphic to a moduli space $M_{\tilde{v}}(S',\alpha,H)$ of $H$-Gieseker semistable $\alpha$-twisted sheaves on a (possibly different) twisted K3 surface $S'$; the vector $\tilde{v}=2\tilde{w}$ is a Fourier--Mukai transform of $v=2w$. Therefore it is enough to prove the claim in this case. As recalled in the proof of \cite[Proposition~2.2]{MZ} (see \cite{Lieblich} and \cite{Yoshioka:TwistedSheaves}), moduli of twisted sheaves are constructed as GIT quotients and have the same deformation theory as untwisted sheaves. In particular $\widetilde{M}_{\tilde{v}}(S',\alpha,H)$ is constructed by blowing up the singular locus (which, under our genericity assumptions, is isomorphic to $\operatorname{Sym}^2M_{\tilde{w}}(S',\alpha,H)$) with its reduced scheme structure. We first claim that the exceptional divisor $\widetilde{\Sigma}_{\tilde{v}}(\alpha)$ of this resolution is an element of square $-6$ and divisibility $3$ in the Beauville--Bogomolov--Fujiki lattice $\oH^2(\widetilde{M}_{\tilde{v}}(S',\alpha,H),\Z)$. 
In order to prove this, we start by noticing that by \cite[Lemma~3.3, Proposition~3.7 and Proposition~3.9]{MZ} one can locally trivially deform $M_{\tilde{v}}(S',\alpha,H)$ to a moduli space of untwisted sheaves (notice that \cite[proposition~3.7]{MZ} holds conditionally to some technical assumptions, which we can always assume by performing a first general deformation of the twisted K3 surface).
More precisely, there exists a curve $B$ and a locally trivial family 
\[ p\colon\mathcal{M}\longrightarrow B \]
such that $\mathcal{M}_{b_1}=M_{\tilde{v}}(S,\alpha,H)$ and $\mathcal{M}_{b_2}=M_{v''}(S'',H'')$ for some polarised K3 surface $(S'',H'')$. If 
\begin{equation}\label{eqn:p tilde} \tilde{p}\colon\widetilde{\mathcal{M}}\longrightarrow B \end{equation}
is the induced family of the desingularisations (i.e.\ the relative blow-up of the singular loci), then by the proof of \cite[Proposition~2.16]{PR13} we get that the exceptional divisors of the fibres form a flat section $\tilde{\sigma}$ of the local system $R^2\tilde{p}_*\Z$. (Here and in the following all the local systems we consider come with a distinguished connection, the Gauss--Manin connection, and flatness of a section has to be interpreted with respect to this connection.) Therefore the claim follows from the untwisted case (\cite{Rapagnetta}).

Now define the homomorphism 
\begin{align*}
f_{\tilde{v}}(S',\alpha,H)\colon & \Gamma_{\tilde{v}}\longrightarrow\oH^2(\widetilde{M}_{\tilde{v}}(S,\alpha,H),\Z) \\
 &  \left(x,k\frac{\sigma}{2}\right) \mapsto \pi^*(\theta_{\tilde{v}}(x))+\frac{k}{2}\widetilde{\Sigma}_{\tilde{v}}(\alpha),
\end{align*}
 where $\pi\colon\widetilde{M}_{\tilde{v}}(S',\alpha,H)\to M_{\tilde{v}}(S',\alpha,H)$ is the desingularisation map and $\widetilde{\Sigma}_{\tilde{v}}(\alpha)$ is the class of the corresponding exceptional divisor. Let $\widetilde{p}\colon\widetilde{\mathcal{M}}\to B$ be a family as in (\ref{eqn:p tilde}). If $\widetilde{\Gamma}_{\tilde{v}}$ is the trivial local system on $B$ with stalk $\Gamma_{\tilde{v}}$, then $f_{\tilde{v}}(S',\alpha,H)$ extends to a morphism of local systems 
$$f_B\colon\widetilde{\Gamma}_{\tilde{v}}\longrightarrow R^2\tilde{p}_*\Z.$$
The proof is concluded as soon as $f_b$ is a Hodge-isometry for one point $b\in B$. By definition of $\tilde{p}\colon\widetilde{\mathcal{M}}\to B$, there exists $b\in B$ such that the fibre $\widetilde{\mathcal{M}}_b$ is isomorphic to a moduli space of untwisted sheaves, hence the claim follows from Proposition~\ref{prop:PR}.
\end{proof}

\begin{remark}\label{rmk:twisted}
The main tool in the results above is \cite[Lemma~7.3]{BM:Projectivity}, which translates problems on $M_v(S,\tau)$ to problems on $M_{\tilde{v}}(S,\alpha)$. Hidden in the proof of Proposition~\ref{prop:Gamma MZ} (more precisely in the proof of \cite[Proposition~2.2]{MZ}) there is the statement that everything we said so far holds for moduli spaces of twisted sheaves on a K3 surface. (For primitive Mukai vectors, the analogous statement is \cite[Theorem~6.10]{BM:Projectivity}.) In particular, for a generic choice of the polarisation, $M_{\tilde{v}}(S,\alpha)$ admits a symplectic desingularisation $\widetilde{M}_{\tilde{v}}(S,\alpha)$ obtained by blowing up the singular locus (identified with $\Sym^2 M_{\tilde{v}/2}(S,\alpha)$) with its reduced scheme structure. Moreover, $\widetilde{M}_{\tilde{v}}(S,\alpha)$ is an irreducible holomorphic symplectic manifold, $\oH^2(M_{\tilde{v}}(S,\alpha),\Z)$ is Hodge-isometric to $\tilde{v}^\perp$ and $\oH^2(\widetilde{M}_{\tilde{v}}(S,\alpha),\Z)$ is Hodge-isometric to $\Gamma_{\tilde{v}}$.
We will use this remark later in the proof of Theorem~\ref{thm:main}.
\end{remark}

\section{LPZ varieties}\label{section:BBF LPZ}

Let $V$ be a smooth cubic fourfold and $\mathcal{A}_V$ be the Kuznetsov component defined by the semi-orthogonal decomposition
\begin{equation}\label{eqn:D(V)}
\operatorname{D}^b(V)=\langle \mathcal{A}_V,\mathcal{O}_V,\mathcal{O}_V(1),\mathcal{O}_V(2)\rangle. 
\end{equation}
The category $\mathcal{A}_V$ is a $\operatorname{CY}_2$-category (\cite[Corollary~4.3]{Kuzi:V14}; see also \cite{Kuzi:Fractional} for a general account about Calabi--Yau categories).
The Mukai lattice $\widetilde{\operatorname{H}}(\mathcal{A}_V)$ introduced in \cite[Definition~2.2]{Add-Thomas} is defined as
\[
\widetilde{\operatorname{H}}(\mathcal{A}_V):= \langle [\mathcal{O}_V],[\mathcal{O}_V(1)],[\mathcal{O}_V(2)]\rangle^\perp \subset \Ktop(V).
\]
Here $\Ktop(V)$ is the topological K-theory of $V$ equipped with the Euler pairing. The Mukai lattice is equipped with a pure Hodge structure of weight 2 induced by the Hodge structure on $\oH^*(V,\Z)$. 
More precisely, if $v\colon\Ktop(V)\to\oH^*(V,\Q)$ is the morphism associating to a sheaf $F$ the vector $v(F)=\operatorname{ch}(F)\sqrt{\operatorname{td}_V}$, then 
\[\widetilde{\oH}(\mathcal{A}_V)^{2,0}=v^{-1}\left( \oH^{3,1}(V)\right). \]
As an abstract lattice, $\widetilde{\operatorname{H}}(\mathcal{A}_V)$ is isometric to $U^{\oplus4}\oplus E_8(-1)^{\oplus2}$, where $U$ is the unimodular hyperbolic plane and $E_8(-1)$ is the negative definite lattice associated to the Dynkin diagram $E_8$ (\cite[Sec.2.3]{Add-Thomas}). 
Notice that if $\mathcal{A}_V\cong\operatorname{D}^b(S)$, for some K3 surfaces $S$, then there is a Hodge-isometry
\[ \widetilde{\operatorname{H}}(\mathcal{A}_V)\cong\widetilde{\oH}(S,\Z). \]

If $\operatorname{pr}\colon\operatorname{D}^b(V)\to\mathcal{A}_V$ denotes the projection functor with respect to the decomposition (\ref{eqn:D(V)}), we define the elements $\lambda_1,\lambda_2\in\widetilde{\operatorname{H}}(\mathcal{A}_V)$ as the classes of $\operatorname{pr}(\mathcal{O}_l(1))$ and $\operatorname{pr}(\mathcal{O}_l(2))$, respectively. Here $l\subset V$ is a line. The classes $\lambda_1$ and $\lambda_2$ are algebraic, i.e.\ $\lambda_1,\lambda_2\in \widetilde{\operatorname{H}}^{1,1}(\mathcal{A}_V)$, and they generate a lattice isometric to $A_2$, the rank $2$ lattice associated to the Dynking diagram $A_2$. More precisely $\lambda_i^2=2$ and $(\lambda_1,\lambda_2)=-1$ (see \cite[(2.5)]{Add-Thomas}). 
Notice that the lattice $A_2$ is always contained in the algebraic part of $\widetilde{\operatorname{H}}(\mathcal{A}_V)$ by construction, and they coincide for the very general cubic fourfold. Moreover, the orthogonal complement $A_2^\perp$ in $\widetilde{\operatorname{H}}(\mathcal{A}_V)$ is Hodge isometric to the primitive cohomology group $\oH^4(V,\Z)_{\operatorname{prim}}$ (see \cite[Proposition~2.3]{Add-Thomas}).

Stability conditions on $\mathcal{A}_V$ have been constructed in \cite[Theorem~1.2]{BLMS}.
Let $\lambda\in \widetilde{\operatorname{H}}(\mathcal{A}_V)^{1,1}$ be a Mukai vector and let us suppose that $\lambda=2\lambda_0$, where $\lambda_0$ is primitive and $\lambda_0^2=2$. For example we can take $\lambda_0=\lambda_1+\lambda_2$. 
For a generic stability condition $\tau$, we consider the moduli stack $\underline{M}_\lambda(V,\tau)$ of $\tau$-semistable objects in $\mathcal{A}_V$. Here, generic means that any strictly  semistable object of class $\lambda$ is S-equivalent to the direct sum of two stable objects of class $\lambda_0$. The moduli stack $\underline{M}_\lambda(V,\tau)$ has a good moduli space $M_\lambda(V,\tau)$ that exists as a proper algebraic space (\cite[Theorem~21.24]{BLNPS}.
\begin{theorem}[\protect{\cite[Theorem~1.1]{PertusiCo}}]
Under the hypothesis above, there exists a smooth and projective variety $\widetilde{M}_\lambda(V,\tau)$ and a symplectic resolution 
$$\pi\colon\widetilde{M}_\lambda(V,\tau)\to M_\lambda(V,\tau).$$ 
Moreover, $\widetilde{M}_\lambda(V,\tau)$ is an irreducible holomorphic symplectic manifold of type OG10.
\end{theorem}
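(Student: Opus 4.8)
As the statement is due to Li, Pertusi and Zhao, I outline the strategy of their proof, which transports to the non-commutative setting the classical arguments of O'Grady, Lehn--Sorger and Meachan--Zhang. The plan is to first identify the singular locus. By the genericity of $\tau$, every strictly $\tau$-semistable object of class $\lambda=2\lambda_0$ is S-equivalent to a direct sum $E_1\oplus E_2$ with $E_1,E_2$ stable of class $\lambda_0$. Since $\lambda_0$ is primitive with $\lambda_0^2=2$, the moduli space $M_{\lambda_0}(V,\tau)$ is, by the primitive case of \cite{BLNPS}, a smooth projective hyperkähler fourfold deformation equivalent to a Hilbert scheme of two points on a K3 surface; the argument of \cite[Lemma~1.1.5]{O'Grady99} then identifies $\operatorname{Sing} M_\lambda(V,\tau)$ with $\operatorname{Sym}^2 M_{\lambda_0}(V,\tau)$, stratified by the big stratum $\{E_1\not\cong E_2\}$ and the deepest stratum $\{[2E]\}$.

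The heart of the matter is the local analytic structure of these singularities. The deformation theory of an object $E\in\mathcal{A}_V$ is governed by its derived endomorphism algebra, and since $\mathcal{A}_V$ is a $\mathrm{CY}_2$-category the Yoneda pairing on $\operatorname{Ext}^1(E,E)$ is symplectic while the Kuranishi obstruction map is controlled by a quadratic form valued in $\operatorname{Ext}^2(E,E)\cong\operatorname{Ext}^0(E,E)^\vee$. For $E=E_1\oplus E_2$ this is precisely the local model analysed by O'Grady and Lehn--Sorger: I would check that étale-locally the singularity of $M_\lambda(V,\tau)$ at a point of the deepest stratum is isomorphic to the O'Grady model, and along the big stratum is a product of that model with a smooth factor. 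One then blows up $\operatorname{Sing} M_\lambda(V,\tau)$ with its reduced scheme structure, verifies smoothness of $\widetilde{M}_\lambda(V,\tau)$ on these local models, and extends the holomorphic symplectic form from the stable locus, so that $\pi$ is a symplectic (crepant) resolution. Projectivity of $M_\lambda(V,\tau)$, which a priori exists only as a proper algebraic space by \cite[Theorem~21.24]{BLNPS}, is obtained by exhibiting a positive determinant class in the style of Bayer--Macrì, and passes to the blow-up.

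To upgrade smoothness and symplecticness to the statement that $\widetilde{M}_\lambda(V,\tau)$ is of type OG10, I would run a deformation argument in the spirit of the proof of Proposition~\ref{prop:Gamma MZ}. The construction of $M_\lambda(V,\tau)$ and of its resolution works in families as $V$ varies in the moduli space of cubic fourfolds and $\tau$ varies inside a chamber; the general theory of \cite{BLNPS} yields a locally trivial family of singular moduli spaces together with a simultaneous resolution. Since the locus of cubic fourfolds admitting an associated K3 surface is non-empty, for such $V$ there is an equivalence $\mathcal{A}_V\simeq\operatorname{D}^b(S)$ (or its twisted analogue), under which $M_\lambda(V,\tau)$ becomes a moduli space of Bridgeland-semistable objects on $S$; for these fibres the preceding theorem of Meachan--Zhang already provides a resolution of type OG10. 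Connecting an arbitrary $V$ to such a special one inside the family, local triviality of the singular spaces together with the induced deformation of the resolutions forces $\widetilde{M}_\lambda(V,\tau)$ to be deformation equivalent to the K3 model, hence of type OG10.

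The main obstacle I anticipate is the local analytic identification of the singularities in the non-commutative setting. On a K3 surface one has an honest sheaf whose local deformations are computed by explicit $\operatorname{Ext}$-groups; here one must instead work intrinsically with objects of $\mathcal{A}_V$ and their $A_\infty$-endomorphism algebra, and verify that the formal neighbourhood is governed by the same quadratic Kuranishi map as in O'Grady's situation. Once this local model is secured, the global smoothness of the blow-up and the extension of the symplectic form are formal, and the deformation-theoretic passage to the K3 case is exactly the mechanism already exploited in this section.
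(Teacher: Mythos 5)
The paper does not prove this statement: it is imported verbatim as \cite[Theorem~1.1]{PertusiCo}, so there is no internal proof to compare against. Your outline is nonetheless a faithful reconstruction of the strategy Li, Pertusi and Zhao actually follow: identification of the strictly semistable locus with $\operatorname{Sym}^2 M_{\lambda_0}(V,\tau)$ using genericity and the primitive-vector theory of \cite{BLNPS}, reduction of the singularity to the O'Grady/Lehn--Sorger local model, projectivity via a Bayer--Macr\`{\i}-type positive determinant class, and a deformation through the family of cubic fourfolds to one with an associated (twisted) K3 surface to pin down the deformation type as OG10 -- the same mechanism this paper reuses in Proposition~\ref{prop:Gamma MZ} and Proposition~\ref{prop:Gamma}.

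The one place where your sketch elides a genuine difficulty is the sentence asserting that the Kuranishi obstruction ``is controlled by a quadratic form.'' The $\mathrm{CY}_2$ structure gives you the symplectic Yoneda pairing on $\operatorname{Ext}^1(E,E)$ and Serre duality $\operatorname{Ext}^2(E,E)\cong\operatorname{Hom}(E,E)^\vee$, but it does not by itself imply that the complete local ring of the moduli space at a polystable point is cut out by the quadratic part of the Kuranishi map: that is a formality statement for the dg (or $A_\infty$) algebra $\operatorname{RHom}(E,E)$, and for an honest sheaf on a K3 surface it is already a nontrivial theorem rather than a formal consequence of the local-to-global Ext spectral sequence. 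You correctly flag this as the main obstacle, but a complete argument must either prove formality for polystable objects of the Kuznetsov component directly or, as in the actual proof, transport it from the commutative case along the locally trivial family of moduli spaces over the space of cubic fourfolds. With that point supplied, the rest of your plan (blow-up of the reduced singular locus, smoothness and extension of the symplectic form checked on the local model, deformation equivalence to the Meachan--Zhang resolution) goes through as you describe.
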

As in the classical case, the singular locus of $M_\lambda(V,\tau)$ is identified with the symmetric product $\operatorname{Sym}^2M_{\lambda_0}(V,\tau)$, and the morphism $\pi$ is the blow-up of $M_\lambda(V,\tau)$ at the singular locus (with its reduced scheme structure).

Let us recall the main features of $M_\lambda(V,\tau)$ and $\widetilde{M}_\lambda(V,\tau)$.
\begin{lemma}
$M_\lambda(V,\tau)$ is a normal and projective variety.
\end{lemma}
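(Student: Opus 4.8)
The plan is to establish normality and projectivity separately, since each follows from results already collected in the excerpt. For normality, I would invoke the general structure theory of moduli of Bridgeland semistable objects. By \cite[Theorem~21.24]{BLNPS} the good moduli space $M_\lambda(V,\tau)$ exists as a proper algebraic space, and the genericity of $\tau$ ensures that the only strictly semistable objects are S-equivalent to direct sums of two stable objects of class $\lambda_0$. The key point is that for moduli of objects in a $\operatorname{CY}_2$-category arising from a cubic fourfold, the analogue of the classical GIT picture holds: the local structure of $M_\lambda(V,\tau)$ at any point is modelled on a quiver variety or, equivalently, a symplectic quotient, and such quotients are normal. Thus the plan for normality is to quote the general construction, where normality is built into the statement that $M_\lambda(V,\tau)$ is a normal projective variety for Bridgeland moduli (the same normality statement recalled in the K3 case just above, from \cite[Theorem~2.15]{BM:MMP} and \cite[Theorem~1.3]{BM:Projectivity}, via the transfer of techniques to $\mathcal{A}_V$).

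The cleanest route, however, is to reduce to the classical and twisted cases already handled. By \cite[Theorem~1.1]{PertusiCo} we have a symplectic resolution $\pi\colon\widetilde{M}_\lambda(V,\tau)\to M_\lambda(V,\tau)$ with $\widetilde{M}_\lambda(V,\tau)$ smooth and projective; I would first use this to deduce that $M_\lambda(V,\tau)$ is a projective variety, not merely a proper algebraic space. Indeed, a symplectic (in particular crepant) resolution $\pi$ is a projective morphism onto its image, and $\widetilde{M}_\lambda(V,\tau)$ being projective forces the target $M_\lambda(V,\tau)=\pi(\widetilde{M}_\lambda(V,\tau))$ to be projective as well, being the image of a projective variety under a proper morphism, hence a projective scheme rather than just an algebraic space. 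For normality, the existence of a resolution together with the fact that $M_\lambda(V,\tau)$ has symplectic (hence canonical, hence rational, Cohen--Macaulay and in particular seminormal) singularities, combined with the identification of the singular locus with $\operatorname{Sym}^2 M_{\lambda_0}(V,\tau)$ of codimension at least two, lets one conclude normality once we know the variety is reduced and satisfies Serre's condition $S_2$; symplectic singularities are in particular normal by definition, so this is immediate once we know $M_\lambda(V,\tau)$ has symplectic singularities.

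The most economical argument I would actually write is therefore to note that $M_\lambda(V,\tau)$ is a singular symplectic variety by \cite[Theorem~1.1]{PertusiCo} (this is what the existence of a symplectic resolution encodes), and that symplectic varieties are by definition normal; projectivity then follows from the projectivity of the resolution $\widetilde{M}_\lambda(V,\tau)$ as above. Concretely: $\widetilde{M}_\lambda(V,\tau)$ is projective and $\pi$ is proper and surjective, so $M_\lambda(V,\tau)$ carries an ample line bundle descended from a suitable $\pi$-relatively trivial polarisation, giving projectivity; and normality is part of the definition of a symplectic variety.

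The main obstacle I expect is subtle rather than deep: it is making the passage from ``proper algebraic space'' (the output of \cite[Theorem~21.24]{BLNPS}) to ``projective variety'' fully rigorous, since projectivity of the resolution a priori only yields projectivity of the \emph{image} scheme, and one must check this image coincides scheme-theoretically with the good moduli space $M_\lambda(V,\tau)$ and that the descended polarisation is genuinely ample on the base rather than merely relatively ample. This is exactly the kind of point handled in the K3 setting by \cite[Theorem~1.3]{BM:Projectivity}, and the honest argument is to observe that the same machinery transfers to the Kuznetsov component $\mathcal{A}_V$, as is implicit throughout \cite{PertusiCo} and \cite{BLNPS}. Once projectivity of $M_\lambda(V,\tau)$ as a scheme is secured, normality is formal from the symplectic-singularity structure, and the proof is complete.
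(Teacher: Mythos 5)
There are two genuine gaps here, one in each half of your argument. For projectivity, the step ``$\widetilde{M}_\lambda(V,\tau)$ is projective and $\pi$ is proper and surjective, hence the target is projective'' is false in general: a proper birational contraction of a projective variety can land in a proper algebraic space that is not projective, and not even a scheme (this is exactly the Moishezon phenomenon --- every Moishezon space becomes projective after a blow-up, so blow-downs of projective varieties need not be projective). Likewise, a $\pi$-trivial line bundle on $\widetilde{M}_\lambda(V,\tau)$ descends to an ample bundle on the base only if one already knows it is semiample and realises $\pi$ as its associated contraction, which is precisely what has to be proved. You do flag this as ``the main obstacle,'' but the fix is not that ``the machinery transfers'' in some implicit way: the actual input, which the paper quotes from the proof of \cite[Theorem~3.1]{PertusiCo} (see \cite[Section~3.7]{PertusiCo}), is that Li, Pertusi and Zhao construct an ample line bundle \emph{directly on the good moduli space} $M_\lambda(V,\tau)$ (a Bayer--Macr\`{\i}-type determinant line bundle); combined with \cite[\href{https://stacks.math.columbia.edu/tag/0D2W}{Lemma 0D2W}]{Stack}, a proper algebraic space carrying an ample line bundle is a projective scheme. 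Your proposal never identifies this input, and without it the passage from proper algebraic space to projective variety does not close.

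For normality, your argument is circular: in Beauville's definition a symplectic variety is \emph{by definition} normal, so one cannot first declare $M_\lambda(V,\tau)$ to be a symplectic variety and then extract normality from the definition. Moreover, the bare existence of a proper birational morphism from a smooth projective variety does not force the target to be normal (such a morphism always factors through the normalisation). The paper instead proves normality by invoking the \'etale-local description of $M_\lambda(V,\tau)$ in terms of nilpotent orbits (\cite[Remark~3.6]{PertusiCo}), i.e.\ the same local models as in the classical Lehn--Sorger analysis, from which normality and irreducibility follow; only \emph{after} this lemma does the paper conclude (in the next lemma) that $M_\lambda(V,\tau)$ is a singular symplectic variety. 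Your first paragraph gestures at exactly this local-model argument, and had you committed to it rather than to the definitional shortcut, the normality half would have been fine.
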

\proof
This is explained in the proof of \cite[Theorem~3.1]{PertusiCo} (see \cite[Section~3.7]{PertusiCo}). First of all, Li, Pertusi and Zhao prove that there exists an ample line bundle $\mathcal{L}$ on $M_\lambda(V,\tau)$, giving an embedding of $M_\lambda(V,\tau)$ into a projective space. By \cite[\href{https://stacks.math.columbia.edu/tag/0D2W}{Lemma 0D2W}]{Stack}, one concludes that $M_\lambda(V,\tau)$ is a scheme. Now, as explained in \cite[Remark~3.6]{PertusiCo}, $M_\lambda(V,\tau)$ has a local description in terms of nilpotent orbits as in the classical case, from which it follows at once both the normality and the fact that $M_\lambda(V,\tau)$ is a variety.
\endproof
\begin{lemma}
$M_\lambda(V,\tau)$ is a singular symplectic variety, in particular it has rational singularities.
\end{lemma}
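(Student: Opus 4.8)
The plan is to deduce everything from the symplectic resolution $\pi\colon\widetilde{M}_\lambda(V,\tau)\to M_\lambda(V,\tau)$ recalled above. Since $\widetilde{M}_\lambda(V,\tau)$ is an irreducible holomorphic symplectic manifold, it carries a holomorphic symplectic $2$-form $\tilde\sigma$, i.e.\ a closed, everywhere non-degenerate holomorphic $2$-form spanning $\oH^0(\widetilde{M}_\lambda(V,\tau),\Omega^2)$. Combined with normality (the previous lemma), this is precisely the input needed to endow $M_\lambda(V,\tau)$ with a symplectic structure in the sense of Beauville, from which rationality of the singularities follows formally.

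First I would pin down where $\pi$ is an isomorphism. By the theorem above the singular locus of $M_\lambda(V,\tau)$ is the strictly semistable locus, identified with $\operatorname{Sym}^2 M_{\lambda_0}(V,\tau)$, and $\pi$ is the blow-up of this locus with its reduced scheme structure. Hence $\pi$ restricts to an isomorphism over the smooth locus $M_\lambda(V,\tau)_{\reg}=M_\lambda(V,\tau)\setminus\Sing(M_\lambda(V,\tau))$, and transporting $\tilde\sigma$ through this isomorphism produces a holomorphic symplectic form $\sigma$ on $M_\lambda(V,\tau)_{\reg}$ (closedness and non-degeneracy are preserved because $\sigma$ is the pullback of $\tilde\sigma$ under a biregular map).

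Next I would verify that $\sigma$ extends, in the sense required by the definition of a symplectic variety, to the resolution $\pi$. This is immediate: on the dense open subset $\pi^{-1}(M_\lambda(V,\tau)_{\reg})$ one has $\pi^*\sigma=\tilde\sigma$, and $\tilde\sigma$ is globally defined and holomorphic on all of $\widetilde{M}_\lambda(V,\tau)$, so $\pi^*\sigma$ extends holomorphically across the exceptional divisor. Thus $(M_\lambda(V,\tau),\sigma)$ is a symplectic variety, and it is genuinely singular because $\Sing(M_\lambda(V,\tau))=\operatorname{Sym}^2 M_{\lambda_0}(V,\tau)$ is non-empty. The final assertion then follows from the general fact that symplectic varieties have canonical Gorenstein, and in particular rational, singularities.

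The argument is essentially formal, so I do not expect a serious obstacle; the only points requiring care are bookkeeping ones, namely confirming that the smooth locus is exactly the complement of the strictly semistable locus, so that $\pi$ is biregular there, and that the blow-up description really identifies $\pi^{-1}(M_\lambda(V,\tau)_{\reg})$ with $M_\lambda(V,\tau)_{\reg}$. Should one prefer to avoid invoking the symplectic form on $\widetilde{M}_\lambda(V,\tau)$, an alternative route uses the local description of $M_\lambda(V,\tau)$ in terms of nilpotent orbits (as in Remark~3.6 of Li, Pertusi and Zhao), which étale-locally coincides with the classical O'Grady model; since those local models are known to be symplectic with rational singularities, transporting the local picture yields the statement directly. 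I would present the resolution-based argument as the main proof, keeping the local-model argument in reserve.
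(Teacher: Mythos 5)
Your argument is correct and is essentially the paper's own proof, just written out in more detail: the paper likewise deduces the statement from the existence of the irreducible symplectic resolution, observing that this makes $M_\lambda(V,\tau)$ a symplectic variety with canonical, hence rational, singularities, citing Elkik and \cite[Proposition~1.3]{Beauville:Singular} for exactly the chain of implications you spell out.
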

\proof
This follows from the fact that $M_\lambda(V,\tau)$ admits an irreducible symplectic resolution. Thus, it has canonical singularities, hence rational singularities \cite{Elkik} (see also \cite[Proposition~1.3]{Beauville:Singular}).
\endproof
\begin{lemma}\label{lemma: pi injective}
$\operatorname{H}^i(M_\lambda(V,\tau),\mathbb{Z})$ has a pure Hodge structure of weight $i$, for $i=1,2$. Moreover, the pullback map 
$$\pi^*\colon\operatorname{H}^1(M_\lambda(V,\tau),\mathbb{Z})\to\operatorname{H}^1(\widetilde{M}_\lambda(V,\tau),\mathbb{Z})$$ 
is an isomorphism and the pullback map 
$$\pi^*\colon\operatorname{H}^2(M_\lambda(V,\tau),\mathbb{Z})\to\operatorname{H}^2(\widetilde{M}_\lambda(V,\tau),\mathbb{Z})$$ 
is injective.

In particular, $\operatorname{H}^2(M_\lambda(V,\tau),\mathbb{Z})$ is endowed with a non-degenerate symmetric bilinear form induced by the one on $\widetilde{M}_\lambda(V,\tau)$ via $\pi^*$, turning it into a lattice of signature $(3,b_2(M_\lambda(V,\tau))-3)$.
\end{lemma}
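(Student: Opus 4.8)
The plan is to treat the three assertions in turn: the general theory of singular symplectic varieties handles the degree-two statements, a spectral sequence argument handles degree one, and a short Hodge-theoretic computation gives the signature.

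Since $M_\lambda(V,\tau)$ is a singular symplectic variety, the results of Bakker and Lehn recalled in Remark~\ref{rmk:H^2 singular} apply to it directly. They give at once that $\oH^2(M_\lambda(V,\tau),\Z)$ carries a pure Hodge structure of weight two, that $\pi^*$ is injective in degree two, and that the restriction along $\pi^*$ of the Beauville--Bogomolov--Fujiki form of $\widetilde{M}_\lambda(V,\tau)$ endows $\oH^2(M_\lambda(V,\tau),\Z)$ with a non-degenerate lattice structure. This settles every degree-two claim except the signature.

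For the first cohomology I would use the Leray spectral sequence $\oH^p(M_\lambda(V,\tau),R^q\pi_*\Z)\Rightarrow\oH^{p+q}(\widetilde{M}_\lambda(V,\tau),\Z)$. As $M_\lambda(V,\tau)$ is normal and $\pi$ is proper and birational, the fibres of $\pi$ are connected and $R^0\pi_*\Z=\Z$, so the five-term exact sequence begins
\[
0\to\oH^1(M_\lambda(V,\tau),\Z)\to[\pi^*]\oH^1(\widetilde{M}_\lambda(V,\tau),\Z)\to\oH^0(M_\lambda(V,\tau),R^1\pi_*\Z)\to\cdots.
\]
Since $\widetilde{M}_\lambda(V,\tau)$ is an irreducible holomorphic symplectic manifold it is simply connected, whence $\oH^1(\widetilde{M}_\lambda(V,\tau),\Z)=0$. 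The displayed injection then forces $\oH^1(M_\lambda(V,\tau),\Z)=0$; in particular $\pi^*$ is an isomorphism in degree one, and the Hodge structure on $\oH^1$, being trivial, is vacuously pure of weight one.

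It remains to compute the signature. Set $W:=\pi^*\oH^2(M_\lambda(V,\tau),\R)\subseteq\oH^2(\widetilde{M}_\lambda(V,\tau),\R)$; by the second paragraph both second cohomologies are pure of weight two, so $\pi^*$ is a morphism of pure Hodge structures, $W$ is a Hodge substructure, and the induced form coincides with the restriction $q|_W$ of the Beauville--Bogomolov--Fujiki form $q$. Because $\pi$ is a symplectic resolution, the symplectic form of $\widetilde{M}_\lambda(V,\tau)$ is the pullback of the symplectic form on the smooth locus of $M_\lambda(V,\tau)$, a generator of $\oH^{2,0}(M_\lambda(V,\tau))$; hence $W$ contains the real plane $\langle\Re\sigma,\Im\sigma\rangle$, on which $q$ is positive definite. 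Choosing an ample class $h$ on the projective variety $M_\lambda(V,\tau)$, the class $\pi^*h\in W$ is nef and big, so $q(\pi^*h)>0$ by the Fujiki relation, and it is of type $(1,1)$, hence orthogonal to that plane. This exhibits a positive-definite $3$-space inside $W$. As $q$ has signature $(3,21)$ on $\oH^2(\widetilde{M}_\lambda(V,\tau),\R)$ (type $\OG[10]$), the positive index of $q|_W$ is exactly three, and since $q|_W$ is non-degenerate of rank $b_2(M_\lambda(V,\tau))$ it has signature $(3,b_2(M_\lambda(V,\tau))-3)$.

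The only genuinely non-formal inputs are the integral statements in degree two, for which I rely entirely on the Bakker--Lehn machinery quoted in Remark~\ref{rmk:H^2 singular}; checking that $M_\lambda(V,\tau)$ meets their hypotheses (a normal projective symplectic variety admitting a symplectic resolution) is exactly what the preceding lemmas provide. Within the argument, the step demanding the most care is the signature computation: one must know that $\pi^*$ identifies the two symplectic classes, so that $W^{2,0}=\oH^{2,0}(\widetilde{M}_\lambda(V,\tau))$, and that the Fujiki positivity $q(\pi^*h)>0$ really holds for the pullback of an ample class. Both follow from $\pi$ being a symplectic birational morphism, but this is where the geometry of the resolution is actually used.
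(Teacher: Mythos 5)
Your proof is correct, and its essential engine is the same as the paper's: the paper's entire proof is a one-line appeal to Bakker--Lehn (\cite[Lemma~2.1, Lemma~3.5]{BakkerLehn:Resolutions}, as flagged in Remark~\ref{rmk:H^2 singular}), applied to $M_\lambda(V,\tau)$ as a projective variety with rational singularities, and you invoke exactly that reference for the genuinely non-formal content, namely the purity of the weight-two Hodge structure, the injectivity of $\pi^*$ on $\oH^2$, and the non-degeneracy of the induced form. What you do differently is to peel off the remaining assertions and prove them by hand rather than folding them into the citation: the degree-one statement via the five-term Leray exact sequence together with simple connectedness of the OG10 manifold $\widetilde{M}_\lambda(V,\tau)$ (which correctly yields $\oH^1(M_\lambda(V,\tau),\Z)=0$, using $R^0\pi_*\Z=\Z$ from normality of the base and connectedness of the fibres of the proper birational map $\pi$), and the signature via the explicit positive $3$-space spanned by $\Re\sigma$, $\Im\sigma$ and $\pi^*h$, with positivity of $q(\pi^*h)$ extracted from the Fujiki relation for the nef and big class $\pi^*h$ (note that the odd exponent $n=5$ is what lets you conclude $q(\pi^*h)>0$ rather than merely $q(\pi^*h)^n>0$). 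This buys a more self-contained and checkable argument at the cost of some length; the paper's version buys brevity by trusting that the cited lemmas of Bakker--Lehn already contain all of these statements in the stated generality. No gap either way.
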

\proof
As we already noticed in Remark~\ref{rmk:H^2 singular}, since $M_\lambda(V,\tau)$ is a projective variety with rational singularities, this is \cite[Lemma~2.1, Lemma~3.5]{BakkerLehn:Resolutions}.
\endproof
\begin{remark}
Since $M_\lambda(V,\tau)$ is a singular symplectic variety, $\operatorname{H}^2(M_\lambda(V,\tau),\mathbb{Z})$ has an intrinsic non-degenerate symmetric bilinear form turning it into a lattice by \cite[\S 5]{BakkerLehn:Global} and references therein. 
This intrinsic lattice structure coincides with the one induced by the irreducible symplectic desingularisation as in the Lemma~\ref{lemma: pi injective}.

\end{remark}

\begin{example}[The vector $\lambda=2\lambda_1+2\lambda_2$]\label{example:LPZ}
Let $V$ be a smooth cubic fourfold and $Y\subset V$ a smooth linear section. Let $E$ be a rank 2 vector bundle on $Y$ with trivial first Chern class and second Chern class of degree $2$. This is called an instanton bundle of charge $2$ on a smooth cubic threefold, see \cite{Druel} for a detailed study of the corresponding moduli space. 
Let $i\colon Y\to V$ be the closed embedding and set $F=i_*E$; it is easy to see that $F\in\mathcal{A}_V$. Li, Pertusi and Zhao prove in \cite{PertusiCo} that there exists a stability condition $\bar{\tau}$ such that $F$ is $\bar{\tau}$-stable (this stability condition is the one constructed in \cite{BLMS} when $V$ is very general). Moreover, by a direct computation one can see that the Mukai vector of $F$ is $\lambda=2\lambda_1+2\lambda_2$. The stability condition $\bar{\tau}$ is generic with respect to $\lambda$ and  the moduli space $M_\lambda(V,\bar{\tau})$ admits a desingularisation $\widetilde{M}_\lambda(V,\bar{\tau})$ that is an irreducible holomorphic symplectic manifold of type OG10. Moreover, in \cite[Section~6]{PertusiCo} the authors construct a birational lagrangian fibration structure on $\widetilde{M}_\lambda(V,\bar{\tau})$ and show that $\widetilde{M}_\lambda(V,\bar{\tau})$ is in fact birational to the twisted intermediate jacobian fibration constructed in \cite{Voisin:Twisted} (cf.\ Section~\ref{section:LSV}).

The generality assumption can conjecturely be made more precise by saying that $V$ does not contain a plane or a rational cubic scroll (see \cite[Section~5.7]{PertusiCo} for the case of a rational cubic scroll -- the case of a plane is expected to behave similarly), and in these two cases one expects to find examples of the walls of the K\"ahler cone described in \cite{MZ} (in the singular setting) and \cite{MO} (in the desingularisation). 
\end{example}

Denote by $M^s_\lambda(V,\tau)\subset M_\lambda(V,\tau)$ the open subset consisting of stable objects and take a quasi-universal family $F\in\operatorname{D}(M^s_\lambda(V,\tau)\times V)_{\operatorname{perf}}$ of similitude $\rho$ (see \cite[Definition~4.5, Remark~4.6]{BM:Projectivity}).
Consider the induced map
$$\theta'\colon\widetilde{\operatorname{H}}(\mathcal{A}_V)\to\operatorname{H}^2(M^s_\lambda(V,\tau),\mathbb{Q}),\qquad x\mapsto\frac{1}{\rho} \left[p_{M^s*}\left(\operatorname{ch}(F). p_{V}^*\left(x^\vee\sqrt{\operatorname{td}_V}\right)\right)\right]_1,$$
where $p_V\colon V\times M^s_\lambda(V,\tau) \to V$ and $p_{M^s}\colon V\times M^s_\lambda(V,\tau) \to M^s_\lambda(V,\tau)$ are the projections and $x^\vee$ is the dual class to $x$ in $\Ktop(V)$.
In the following we work with the restriction
$$ \theta'_\lambda\colon\lambda^\perp\to\operatorname{H}^2(M^s_\lambda(V,\tau),\mathbb{Q}),$$
which do not depend on the choice of the quasi-universal family.
Our first remark is that this morphism extends to a morphism to $\operatorname{H}^2(M_\lambda(V,\tau),\Z)$.
\begin{lemma}
The following pullback morphism is an isomorphism: $$i^*\colon\oH^2(M_\lambda(V,\tau),\Q)\to\oH^2(M^s_\lambda(V,\tau),\Q).$$
In particular, there exists a canonical morphism $\theta_{\lambda}$ making the diagram
\begin{equation*}
\xymatrix{
\lambda^\perp  \ar[dr]_{\theta'_{\lambda}} \ar[rr]^{\theta_{\lambda}} & & \operatorname{H}^2(M_\lambda(V,\tau),\mathbb{Q}) \ar[dl]^{i^*} \\
 & \operatorname{H}^2(M^s_\lambda(V,\tau),\mathbb{Q}) &
}
\end{equation*}
commutative.
\end{lemma}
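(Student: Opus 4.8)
The plan is to compare $X:=M_\lambda(V,\tau)$ with its symplectic resolution $\pi\colon\widetilde X:=\widetilde M_\lambda(V,\tau)\to X$. Write $Z:=\Sing(X)\cong\operatorname{Sym}^2 M_{\lambda_0}(V,\tau)$ and $U:=M^s_\lambda(V,\tau)=X\setminus Z$; since $\dim M_{\lambda_0}(V,\tau)=4$, the locus $Z$ has complex codimension $2$ in $X$. As $\pi$ is the blow-up of the reduced $Z$, it restricts to an isomorphism $\pi_U\colon\widetilde U\xrightarrow{\sim}U$, where $\widetilde U:=\pi^{-1}(U)=\widetilde X\setminus E$ and $E$ is the irreducible exceptional divisor. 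I would run the localization sequence
\[
\cdots\to\oH^k_Z(X,\Q)\to\oH^k(X,\Q)\xrightarrow{i^*}\oH^k(U,\Q)\to\oH^{k+1}_Z(X,\Q)\to\cdots,
\]
so that the statement for $k=2$ follows as soon as $\oH^2_Z(X,\Q)=\oH^3_Z(X,\Q)=0$. The final assertion is then formal: one sets $\theta_\lambda:=(i^*)^{-1}\circ\theta'_\lambda$, and the triangle commutes by construction.

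To obtain the vanishing I would stratify $Z=Z^\circ\sqcup\delta$, where $Z^\circ$ parametrises unordered pairs of non-isomorphic stable objects and $\delta\cong M_{\lambda_0}(V,\tau)$ is the diagonal, of complex codimension $6$ in $X$. Along $Z^\circ$ the Kuranishi (nilpotent orbit) local model, the same one used above for normality, identifies the transverse singularity with the $A_1$-surface singularity $\C^2/\{\pm1\}$: at a polystable point $F_1\oplus F_2$ with $F_1\not\cong F_2$ one has $\Ext^1(F_i,F_j)\cong\C^2$, and the symplectic reduction of $\Ext^1(F_1,F_2)\oplus\Ext^1(F_2,F_1)$ by the relative automorphism torus $\C^*$ is precisely the nilpotent cone of $\mathfrak{sl}_2$. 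Hence the stalks of the local cohomology sheaves $\mathcal H^q_Z(\Q_X)$ along $Z^\circ$ are the reduced groups $\widetilde{\oH}^{q-1}(\mathbb{RP}^3,\Q)$, which vanish for $q\le3$; therefore $\oH^k_{Z^\circ}(X\setminus\delta,\Q)=0$ for $k\le3$. The excision triangle $\oH^k_\delta(X,\Q)\to\oH^k_Z(X,\Q)\to\oH^k_{Z^\circ}(X\setminus\delta,\Q)$ then reduces the claim to $\oH^k_\delta(X,\Q)=0$ for $k=2,3$.

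A second, more Hodge-theoretic route (closer to the paper's toolkit, and a useful cross-check) goes through the resolution directly. Since $\pi^*$ is injective and $\oH^2(X,\Z)$ is pure of weight two by Lemma~\ref{lemma: pi injective}, the identity $\pi_U^*\circ i^*=j^*\circ\pi^*$, with $j\colon\widetilde U\hookrightarrow\widetilde X$, reduces injectivity of $i^*$ to the fact that $\ker(j^*)=\Q\cdot[E]$ meets $\img(\pi^*)$ only in $0$; this holds because $[E]$ restricts to $-2$ on the exceptional $(-2)$-curve lying over a generic point of $Z$, while every class in $\img(\pi^*)$ restricts to $0$ on it. For surjectivity one uses that $\img(j^*)=W_2\oH^2(\widetilde U,\Q)$, the lowest weight part (as $\widetilde X$ is a smooth compactification of $\widetilde U$), together with the purity of $\oH^2(U,\Q)$; purity is the Hodge-theoretic shadow of the same vanishing and rests on $\oH^1(E,\Q)=0$, which follows from $E$ being fibred over $\operatorname{Sym}^2 M_{\lambda_0}(V,\tau)$ with $\oH^1(M_{\lambda_0}(V,\tau),\Q)=0$.

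In both approaches the single real difficulty is the deepest stratum $\delta$. Over the generic, codimension-two stratum the transverse $A_1$-singularity makes the relevant local (or boundary) cohomology vanish with room to spare, but near $\delta$ one must control an $11$-real-dimensional link and rule out weight-$3$ and weight-$4$ contributions to $\oH^2$. I expect to settle this by invoking the explicit Lehn--Sorger/O'Grady local model at $\delta$ (a symplectic cone of complex dimension $6$), which forces the reduced rational cohomology of its link to vanish in degrees $\le2$, equivalently $\oH^k_\delta(X,\Q)=0$ for $k=2,3$; in the second approach the same input enters as a log-resolution analysis of $E$ over $\delta$. Everything else is formal.
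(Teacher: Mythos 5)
Your overall strategy is genuinely different from the paper's. The paper observes that the statement is topological and invariant under locally trivial deformations, deforms $M_\lambda(V,\tau)$ to a singular moduli space $M_v(S,H)$ of sheaves on a K3 surface, and then quotes the diagram of exact sequences from the proof of \cite[Lemma~3.7]{PR13}: there the cokernel of $i^*$ on integral cohomology is shown to land in the torsion of $\oH^2(\widetilde{M},\Z)/\pi^*\oH^2(M,\Z)$, hence vanishes rationally. Your local-cohomology attack is more self-contained in spirit, but it has a genuine hole exactly where you flag it: the vanishing $\oH^k_\delta(X,\Q)=0$ for $k=2,3$ at the deepest stratum $\delta=\{F\oplus F\}$ is not proved, only ``expected''. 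Since $X$ is singular along $\delta$, you cannot invoke the codimension bound valid for smooth ambient spaces; you really do need to compute the degree $\leq 2$ rational cohomology of the $11$-dimensional link of the Lehn--Sorger cone \cite{LehnSorger} (or find a substitute, e.g.\ a purity argument for that cone), and that computation is the entire content of the hard case. Until it is supplied, the first route does not close.

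The second route has a subtler gap: purity of $\oH^2(U,\Q)$ together with $\img(j^*)=W_2\oH^2(\widetilde{U},\Q)$ gives surjectivity of $j^*\colon\oH^2(\widetilde{X},\Q)\to\oH^2(\widetilde{U},\Q)$, not of $i^*$. To descend to $X$ you need every class in $\oH^2(\widetilde{X},\Q)$ to lie in $\pi^*\oH^2(X,\Q)\oplus\Q[E]$, i.e.\ $b_2(\widetilde{X})=b_2(X)+1$; but that is (the rational shadow of) Proposition~\ref{prop:Gamma}, which the paper proves \emph{after}, and using, the present lemma --- so as written this branch is circular. (Your injectivity argument via $E\cdot\P^1=-2$ on the generic exceptional fibre is fine, and is essentially how \cite{PR13} sees that $[E]\notin\pi^*\oH^2(M,\Z)$.) The cleanest repair, and the one the paper takes, is to note that all of these statements are locally trivially deformation invariant and to reduce to the K3 sheaf-theoretic case, where the relevant exact sequences are already established.
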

\begin{proof}
First of all, notice that the claim is topological in nature, and that the variety $M_\lambda(V,\tau)$ is a locally trivial deformation of a singular moduli space $M_{v}(S,H)$, where $S$ is a K3 surface, $v=2w$ with $w^2=2$ and $H$ is generic with respect to $v$. Therefore it is enough to prove the claim for $M=M_v(S,H)$. 
This is essentially done in \cite[Lemma~3.7]{PR13} (see also the second paragraph of \cite[page~18]{PR13}), let us briefly explain why. 
In the proof of \cite[Lemma~3.7]{PR13}, Perego and Rapagnetta prove that there exists a commutative diagram with exact rows
\[  
\xymatrix{
0\ar@{->}[r]\ar@{->}[d] & \operatorname{H}^2(M,\Z)\ar@{->}[r]\ar@{->}[r]^{i^*}\ar@{^{(}->}[d]^{\pi^*} & \operatorname{H}^2(M^s,\Z)\ar@{->}[d]^{\cong} \\
\Z\ar@{^{(}->}[r]^-{\tilde{c}} & \operatorname{H}^2(\widetilde{M},\Z)\ar@{->>}[r]^-{\tilde{\i}} & \operatorname{H}^2(\pi^{-1}(M^s),\Z),
}
\]
where $\pi\colon\widetilde{M}\to M$ is the symplectic resolution and $\tilde{c}(1)=\widetilde{\Sigma}$ is the exceptional divisor. Notice that the image of $\tilde{c}$ is not contained in $\pi^*(\operatorname{H}^2(M,\Z))$, so that eventually the defect of surjectivity of $i^*$ is contained in the torsion part of the quotient $\oH^2(\widetilde{M},\Z)/\oH^2(M,\Z)$. Since the latter is a finite group, the claim follows.

\end{proof}
\begin{proposition}\label{prop:v perp}
The homomorphism $\theta_{\lambda}$ is integral, i.e.\ 
\[
\theta_\lambda(\lambda^\perp)\subseteq\oH^2(M_\lambda(V,\tau),\Z).
\]
Moreover, 
\[
\theta_\lambda\colon\lambda^\perp\longrightarrow \oH^2(M_\lambda(V,\tau),\Z)
\]
is a Hodge-isometry.
\end{proposition}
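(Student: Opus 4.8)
The plan is to reduce the statement to the case of a (possibly twisted) K3 surface, where it is already available, and then to propagate it to the Kuznetsov component through a locally trivial deformation, following the same philosophy as the proof of Proposition~\ref{prop:Gamma MZ}. The starting observation is that the Mukai morphism is defined in families: as the cubic fourfold varies in a connected family $\mathcal{V}\to B$ of smooth cubics carrying a relative generic stability condition on the relative Kuznetsov component, both $\lambda^\perp$ and $\oH^2(M_\lambda(V,\tau),\Z)$ underlie local systems of lattices (with their Gauss--Manin connections), and a relative quasi-universal family on $\mathcal{M}\times_B\mathcal{V}$ induces a morphism of local systems extending $\theta_\lambda$ fibrewise. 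I would then isolate three formal properties. First, the Mukai pairing on $\lambda^\perp$ and the Beauville--Bogomolov--Fujiki form on $\oH^2(M_\lambda(V,\tau),\Z)$ are flat, so the property of $\theta_\lambda$ being an isometry is locally constant on $B$. Second, the integral lattices $\lambda^\perp$ and $\oH^2(M_\lambda(V,\tau),\Z)$ are the integral structures of these local systems, so both the integrality of $\theta_\lambda$ and its being an isomorphism of $\Z$-lattices are locally constant on $B$. Third, on each individual fibre $\theta_\lambda$ is induced by an algebraic correspondence and is therefore a morphism of weight-$2$ Hodge structures.

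Next I would treat the base case. Cubic fourfolds admitting an associated twisted K3 surface are dense, so I may choose $B$ connected with a point $b_0\in B$ for which $\mathcal{A}_{V_{b_0}}\cong\Db^b(S,\alpha)$ for some twisted K3 surface $(S,\alpha)$; this equivalence induces a Hodge-isometry $\widetilde{\oH}(\mathcal{A}_{V_{b_0}})\cong\widetilde{\oH}(S,\alpha,\Z)$ carrying $\lambda=2\lambda_0$ to a vector $\tilde v=2\tilde w$, and hence $\lambda^\perp$ to $\tilde v^\perp$ Hodge-isometrically (\cite{huybrechts-twisted}). Under this equivalence $M_\lambda(V_{b_0},\tau)\cong M_{\tilde v}(S,\alpha)$, and by the Fourier--Mukai invariance of the Mukai morphism (Proposition~\ref{prop:Gamma MZ}(2)) the map $\theta_\lambda$ at $b_0$ is identified with $\theta_{\tilde v}$. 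By Proposition~\ref{prop:Gamma MZ}(2) and Remark~\ref{rmk:twisted}, the Mukai morphism $\theta_{\tilde v}\colon\tilde v^\perp\to\oH^2(M_{\tilde v}(S,\alpha),\Z)$ is an integral Hodge-isometry; in particular integrality, the isometry property, and the property of being an isomorphism of integral lattices all hold at $b_0$.

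Finally I would propagate. By the first two formal properties above, integrality, the isometry property, and being an isomorphism of $\Z$-lattices are locally constant along the connected base $B$, hence hold at the fibre corresponding to the original cubic fourfold $V$; by the third property, $\theta_\lambda$ is a morphism of Hodge structures there. Putting these together, $\theta_\lambda\colon\lambda^\perp\to\oH^2(M_\lambda(V,\tau),\Z)$ is an integral isomorphism of lattices that is simultaneously an isometry and a morphism of Hodge structures, that is, a Hodge-isometry, as claimed.

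The main obstacle is the rigorous construction and control of the family. One must produce a connected family of smooth cubic fourfolds, equipped with a compatible relative generic stability condition on the relative Kuznetsov component in the sense of \cite{BLNPS}, linking $V$ to a cubic with an associated twisted K3 surface; construct the relative good moduli space together with a relative quasi-universal family; and verify that the induced extension of $\theta_\lambda$ is genuinely flat for the Gauss--Manin connection, which is exactly what makes integrality and the isometry property locally constant. Ensuring that genericity of $\tau$ persists along the family (so that each fibre is singular of type OG10 with the expected blow-up desingularisation), and that the base point realises a \emph{twisted} rather than untwisted K3 so that Remark~\ref{rmk:twisted} applies, are the delicate technical points.
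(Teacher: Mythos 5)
Your proposal is correct and follows essentially the same route as the paper: both arguments deform $V$ over a connected base to a cubic fourfold whose Kuznetsov component is equivalent to the derived category of a K3 surface, use \cite[Proposition~3.7]{PertusiCo} to get a relative moduli space, extend the Mukai morphism to a morphism of local systems, verify integrality and the Hodge-isometry property at the special fibre via Proposition~\ref{prop:Gamma MZ}(2) (i.e.\ \cite[Theorem~2.7]{MZ}), and propagate by flatness and connectedness. The only (immaterial) difference is that you land on a cubic with an associated \emph{twisted} K3 surface and invoke Remark~\ref{rmk:twisted}, whereas the paper uses a cubic with an associated untwisted K3 surface, for which Proposition~\ref{prop:Gamma MZ}(2) applies directly.
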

\proof
The claims follow at once via a deformation argument.
Let $\mathcal{V}\to (B, 0)$ be a deformation of $V$ over a curve $B$ such that for a point $b\in B$ the cubic fourfold $\mathcal{V}_{b}$ has an associated K3 surface $S$ with $D^b(S)\simeq \mathcal{A}_{V_{b}}$.
 By \cite[Proposition~3.7]{PertusiCo} the relative moduli space 
 \[
p\colon\mathcal{M}\longrightarrow B.
\]
exists and is a locally trivial deformation of $\mathcal M _0 = M_\lambda(V,\tau)$ with $\mathcal M_b$ isomorphic to $M_\lambda(S, \tau)$ for some stability condition $\tau$ on $D^b(S)$.

Since $\theta'_{\lambda,\Q}$ is defined via a quasi-universal family, it also deforms: there exists a well-defined morphism of local systems
\[
\tilde{\theta}\colon\lambda^\perp_{\Q}\longrightarrow R^2p_*\Q.
\] 
Now, over the point $b$ the map $\tilde{\theta}_{b}$ coincides with the map $\theta_\lambda\colon\lambda^\perp\to\oH^2(M_\lambda(S,\tau),\Q)$ which is integral and a Hodge-isometry by item (2) of Proposition~\ref{prop:Gamma MZ} (more precisely, \cite[Theorem~2.7]{MZ}). Since $B$ is connected, the same must be true for any other point of $B$, in particular for $0$.
\endproof
\begin{remark}
If the Mukai vector $v$ is primitive, the same result is \cite[Theorem~29.2]{BLNPS}.
\end{remark}

\begin{corollary}\label{cor:Pic}
Under the hypotheses above, one has that 
\[
\Pic(M_\lambda(V,\tau))=\oH^{1,1}(M_\lambda(V,\tau),\Z)=\oH^{1,1}(M_\lambda(V,\tau))\cap\oH^2(M_\lambda(V,\tau),\Z)\cong(\lambda^\perp)^{1,1}.
\]
\end{corollary}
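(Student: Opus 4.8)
The plan is to deduce everything from the Hodge-isometry $\theta_\lambda$ of Proposition~\ref{prop:v perp} combined with the exponential sequence on $M:=M_\lambda(V,\tau)$. The three expressions on the right-hand side are, by convention, the same object: $\oH^{1,1}(M,\Z)$ is shorthand for the integral $(1,1)$-classes $\oH^{1,1}(M)\cap\oH^2(M,\Z)$ inside the pure weight-two Hodge structure supplied by Lemma~\ref{lemma: pi injective}. So the genuine content is the identification $\Pic(M)=\oH^{1,1}(M)\cap\oH^2(M,\Z)$, after which the transport through $\theta_\lambda$ is immediate.

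First I would record the two cohomological facts I need. Since $M$ is a projective variety with rational singularities (by the lemmas above), the resolution $\pi\colon\widetilde{M}\to M$ satisfies $\pi_*\sO_{\widetilde{M}}=\sO_M$ and $R^i\pi_*\sO_{\widetilde{M}}=0$ for $i>0$, so Leray gives $\oH^i(M,\sO_M)\cong\oH^i(\widetilde{M},\sO_{\widetilde{M}})$. As $\widetilde{M}$ is of type OG10 we have $h^{0,1}(\widetilde{M})=0$ and $h^{0,2}(\widetilde{M})=1$, whence $\oH^1(M,\sO_M)=0$ and $\oH^2(M,\sO_M)\cong\C$. Now I apply the exponential sequence $0\to\Z\to\sO_M\to\sO_M^*\to0$ on the analytic space $M^{\mathrm{an}}$; by GAGA one has $\Pic(M)=\oH^1(M,\sO_M^*)$, and the long exact sequence reads
\[
\oH^1(M,\sO_M)\longrightarrow\Pic(M)\xrightarrow{\ c_1\ }\oH^2(M,\Z)\longrightarrow\oH^2(M,\sO_M).
\]
The vanishing $\oH^1(M,\sO_M)=0$ makes $c_1$ injective, and its image is the kernel of the projection $\oH^2(M,\Z)\to\oH^2(M,\sO_M)$, which computes the $(0,2)$-part of the Hodge structure on $\oH^2(M,\Z)$ (compatibly with $\pi^*$). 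Because integral classes are real and $\oH^{0,2}(M)=\overline{\oH^{2,0}(M)}$, this kernel is exactly $\oH^2(M,\Z)\cap\oH^{1,1}(M)$, giving $\Pic(M)=\oH^{1,1}(M,\Z)=\oH^{1,1}(M)\cap\oH^2(M,\Z)$.

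Finally, Proposition~\ref{prop:v perp} furnishes a Hodge-isometry $\theta_\lambda\colon\lambda^\perp\to\oH^2(M,\Z)$; being an isomorphism of Hodge structures it carries $(\lambda^\perp)^{1,1}$ isomorphically onto $\oH^{1,1}(M,\Z)=\Pic(M)$, which yields the last isomorphism. The only genuinely non-formal input is the pair of vanishings in the middle paragraph, i.e.\ controlling $\oH^1(M,\sO_M)$ and $\oH^2(M,\sO_M)$ via rationality of the singularities and the OG10 Hodge numbers; the rest is the standard exponential-sequence argument together with Proposition~\ref{prop:v perp}.
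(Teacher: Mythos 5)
Your proof is correct. The paper gives no argument here at all---it declares the corollary straightforward and points to the analogous statement \cite[Corollary~2.8]{MZ}---and what you have written is exactly the standard argument being alluded to: rational singularities identify $\oH^i(M,\sO_M)$ with $\oH^i(\widetilde M,\sO_{\widetilde M})$, the OG10 Hodge numbers give $\oH^1(M,\sO_M)=0$, and the exponential sequence combined with the Hodge-isometry $\theta_\lambda$ of Proposition~\ref{prop:v perp} finishes the job; you also correctly flag the one delicate point, namely that the kernel of $\oH^2(M,\Z)\to\oH^2(M,\sO_M)$ is identified with the integral $(1,1)$-classes via $\pi^*$, since the pure Hodge structure on $\oH^2(M,\Z)$ is itself defined through the embedding of Lemma~\ref{lemma: pi injective}.
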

\begin{proof}
Straightforward (see \cite[Corollary~2.8]{MZ} for the analogous result in the commutative case).
\end{proof}

\begin{corollary}\label{cor:Donaldson}
Suppose that $\lambda\in A_2$. Then there exists an isometric embedding of Hodge structures
\begin{equation}\label{eqn:Donaldson}
\xi_\lambda\colon\oH^4(V,\Z)_{\operatorname{prim}}\longrightarrow \oH^2(M_\lambda(V,\tau),\Z).
\end{equation}
\end{corollary}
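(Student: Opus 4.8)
The plan is to realise $\xi_\lambda$ as a composition of three maps, each of which is an isometric embedding of weight-two Hodge structures, so that no computation beyond Proposition~\ref{prop:v perp} is needed.

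First I would invoke \cite[Proposition~2.3]{Add-Thomas}, recalled in Section~\ref{section:BBF LPZ}, which gives a Hodge isometry $\alpha\colon\oH^4(V,\Z)_{\operatorname{prim}}\xrightarrow{\sim}A_2^\perp$ between the primitive cohomology of the cubic fourfold and the orthogonal complement of $A_2$ inside the Mukai lattice $\widetilde{\operatorname{H}}(\mathcal{A}_V)$. This identifies the source of $\xi_\lambda$ with a sublattice of $\widetilde{\operatorname{H}}(\mathcal{A}_V)$ carrying its restricted weight-two Hodge structure.

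The hypothesis $\lambda\in A_2$ is then used exactly once. Since $A_2^\perp$ consists of all classes orthogonal to every element of $A_2$, and $\lambda$ is one such element, we obtain the inclusion of sublattices $A_2^\perp\subseteq\lambda^\perp$, hence an injection $\iota\colon A_2^\perp\hookrightarrow\lambda^\perp$. This $\iota$ preserves the bilinear form, as both lattices carry the form restricted from $\widetilde{\operatorname{H}}(\mathcal{A}_V)$, and it is a morphism of Hodge structures, as both Hodge structures are restrictions of the one on $\widetilde{\operatorname{H}}(\mathcal{A}_V)$. Note that we only need $\iota$ to be an isometric embedding, not a primitive one, so no saturation issue arises.

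Finally I would postcompose with the Hodge isometry $\theta_\lambda\colon\lambda^\perp\to\oH^2(M_\lambda(V,\tau),\Z)$ provided by Proposition~\ref{prop:v perp}, and set $\xi_\lambda:=\theta_\lambda\circ\iota\circ\alpha$. Being a composition of two isometries of Hodge structures and one isometric embedding of Hodge structures, $\xi_\lambda$ is itself an isometric embedding of Hodge structures, and it is integral because each of the three constituent maps is. There is essentially no obstacle here: the entire content sits in Proposition~\ref{prop:v perp} (the integrality and Hodge-isometry of $\theta_\lambda$), and the corollary merely transports the Add--Thomas description of $A_2^\perp$ through it. The sole role of the assumption $\lambda\in A_2$ is to guarantee the inclusion $A_2^\perp\subseteq\lambda^\perp$, which is precisely what makes the domain $\oH^4(V,\Z)_{\operatorname{prim}}\cong A_2^\perp$ land inside the domain $\lambda^\perp$ of $\theta_\lambda$.
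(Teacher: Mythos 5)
Your argument is correct and coincides with the paper's own proof: both identify $\oH^4(V,\Z)_{\operatorname{prim}}$ with $A_2^\perp$ via \cite[Proposition~2.3]{Add-Thomas}, use $\lambda\in A_2$ to obtain the inclusion $A_2^\perp\subseteq\lambda^\perp$, and then define $\xi_\lambda$ as the restriction of the Hodge isometry $\theta_\lambda$ from Proposition~\ref{prop:v perp}. No discrepancy to report.
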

\begin{proof}
Since $\lambda\in A_2\subset\widetilde{\oH}(\mathcal{A}_V,\Z)$ and $A_2^\perp=\oH^4(V,\Z)_{\operatorname{prim}}$ by \cite[Proposition~2.3]{Add-Thomas}, we have a natural inclusion 
\[
\oH^4(V,\Z)_{\operatorname{prim}}\subset \lambda^\perp
\]
that preserves both the lattice and the Hodge structures. Then $\xi_\lambda$ is defined as the restriction of $\theta_\lambda$ and the claim follows from Proposition~\ref{prop:v perp}.
\end{proof}
\begin{remark}[The Donaldson morphism]
Let $S$ be a projective K3 surface and $v=(2,0,-2)$. The moduli space $M_v(S,H)$ is the singular variety studied in the original paper \cite{O'Grady99}. For what follows, we refer to the book \cite{FM}. A general point in $M_v(S,H)$ corresponds to a slope-stable rank 2 vector bundle $E$ on $S$ whose first Chern class vanishes and whose second Chern class has degree $4$. The underlying complex vector bundle of $E$ has the structure of a $\operatorname{SU}(2)$-principal bundle and it admits an anti-self-dual connection of charge $0$. In fact, more precisely, the open locus $M^{\operatorname{lf}}_v(S,H)$ of locally free sheaves is isomorphic (as a real analytic space) to the corresponding moduli space of anti-self-dual connections (\cite[Theorem IV.3.9]{FM}). By Uhlenbeck's Weak Compactness Theorem (cf.\ \cite[Theorem~III.3.15]{FM}), the space of anti-self-dual connections admits a natural compactification that we denote by $\overline{M}^U_S$. By \cite[Corollary~4.3]{Li}, the Uhlenbeck space $\overline{M}^U_S$ has an algebraic structure and there exists a divisorial contraction $\phi\colon M_{(2,0,-2)}(S,H)\to \overline{M}^U_S$ (see also \cite[Section~3.1]{O'Grady99}). The \emph{Donaldson morphism} is the morphism 
\[\delta\colon\oH^2(S,\Z)\longrightarrow\oH^2(\overline{M}^U_S,\Z) \] 
defined by taking the slant product with a universal bundle (see \cite[Theorem III.3.10, Theorem III.6.1]{FM}). Notice that it is injective (cf.\ \cite[Proposition VII.2.17]{FM}). Pulling back via $\phi$ gives an injective morphism
\[ \xi_S=\phi^*\circ\delta\colon\oH^2(S,\Z)\longrightarrow\oH^2(M_{(2,0,-2)}(S,H),\Z). \]

If $U\subset\oH^{\operatorname{even}}(S,\Z)$ is the hyperbolic plane generated by $\oH^0(S,\Z)$ and $\oH^4(S,\Z)$, then $v=(2,0,-2)\in U$ and $\oH^2(S,\Z)=U^\perp\subset v^\perp$. By definition, the Donaldson morphism coincides with the restriction to $\oH^2(S,\Z)$ of the morphism $v^\perp\to\oH^2(M_v(S,H),\Z)$ (see Proposition~\ref{prop:PR}).

Now, by definition $\lambda\in A_2\subset\widetilde{\oH}(\mathcal{A}_V)$ and by \cite[Proposition~2.3]{Add-Thomas} $A_2^\perp=\oH^4(V,\Z)_{\operatorname{prim}}$. Moreover, when $V$ has an associated K3 surface, then there is a Hodge-isometric embedding $\oH^2(S,\Z)_{\operatorname{prim}}\to\oH^4(V,\Z)_{\operatorname{prim}}$ (\cite[Proposition~1.25]{Huybrechts:Gargnano}).

Therefore we regard $\xi_\lambda$ in Corollary~\ref{cor:Donaldson} as a generalised version of the Donaldson morphism.

\end{remark}

\begin{example}\label{example:Pic}
Let $V$ be a smooth cubic fourfold and consider the LPZ variety in Example~\ref{example:LPZ}. 
There is a Hodge-isometric embedding
\begin{equation}\label{eqn:isometric embedding} \oH^4(V,\Z)_{\operatorname{prim}}\longrightarrow\oH^2(\widetilde{M}_{2\lambda_1+2\lambda_2}(V,\tau),\Z) 
\end{equation}
obtained by composing the Donaldson morphism (\ref{eqn:Donaldson}) with the pullback by the desingularisation map (see Lemma~\ref{lemma: pi injective}). We claim that the orthogonal complement of $\oH^4(V,\Z)_{\operatorname{prim}}$ in $\oH^2(\widetilde{M}_{2\lambda_1+2\lambda_2}(V,\tau),\Z)$ is generated by the class of the exceptional divisor and by an algebraic and isotropic class. In fact, by \cite[Theorem~1.3]{PertusiCo}, $\widetilde{M}_{2\lambda_1+2\lambda_2}(V,\tau)$ is always birational to an irreducible holomorphic symplectic manifold having a lagrangian fibration structure. If $b_V\in\Pic(\widetilde{M}_{2\lambda_1+2\lambda_2}(V,\tau))$ is the isotropic movable class corresponding to the pullback of the polarisation on the base of the fibration, then $b_V$ remains of type $(1,1)$ on all the deformations induced by deformations of the cubic fourfolds. The same holds for the class of the exceptional divisor.
Therefore the claim follows by deforming to a very general cubic fourfold in the sense of Hassett.

If we denote by $\widetilde{\Sigma}_V$ the class of the exceptional divisor, then 
\[ \Pic(\widetilde{M}_{2\lambda_1+2\lambda_2}(V,\tau))=\langle \oH^{2,2}(V,\Z)_{\operatorname{prim}}, \widetilde{\Sigma}_V,b_V\rangle, \]
where $\oH^{2,2}(V,\Z)_{\operatorname{prim}}=\oH^4(V,\Z)_{\operatorname{prim}}\cap\oH^{2,2}(V).$ 

We finish by noticing that the lattice generated by $\widetilde{\Sigma}_V$ and $b_V$ is isometric to the non-unimodular hyperbolic plane $U(3)$. In fact, if the cubic fourfold is very general, then by \cite[Theorem~1.3]{PertusiCo}
we have a chain of equalities 
\[ \langle \widetilde{\Sigma}_V,b_V\rangle=\Pic(\widetilde{M}_{2\lambda_1+2\lambda_2}(V,\tau))\cong\Pic(\operatorname{IJ}^t(V))=U(3), \]
where the last equality is \cite[Lemma~6.2]{MO}. Here $\operatorname{IJ}^t(V)$ is the symplectic compactification of the twisted intermediate jacobian fibration of $V$ \cite{Voisin:Twisted} (see also Section~\ref{section:LSV}).

Finally, let us also notice that the lattice generated by $\widetilde{\Sigma}_V$ and $b_V$ is primitively embedded in $\Pic(\widetilde{M}_{2\lambda_1+2\lambda_2}(V,\tau))$.
\end{example}

As a consequence of Corollary~\ref{cor:Donaldson}, we get the following Torelli-like statement for certain LPZ varieties, namely the varieties $M_{2\lambda_1+2\lambda_2}(V,\tau)$ in Example~\ref{example:LPZ}. As usual, we denote by $\widetilde{M}_{2\lambda_1+2\lambda_2}(V,\tau)$ the symplectic desingularisation, and we recall that $\widetilde{M}_{2\lambda_1+2\lambda_2}(V,\tau)$ has a birational lagrangian fibration structure, i.e.\ it is birational to an irreducible holomorphic symplectic manifold having a lagrangian fibration structure (see Section~\ref{section:LSV} or \cite[Theorem~1.3]{PertusiCo}).
\begin{theorem}\label{thm:Torelli}
Let $V_1$ and $V_2$ be two smooth cubic fourfolds and consider the desingularised LPZ varieties $\widetilde{M}_{2\lambda_1+2\lambda_2}(V_1,\tau_1)$ and $\widetilde{M}_{2\lambda_1+2\lambda_2}(V_2,\tau_2)$, where $\tau_i$ is a $2\lambda_1+2\lambda_2$-generic stability condition on the Kuznetsov component $\mathcal{A}_{V_i}$ of $V_i$.
The following conditions are equivalent.
\begin{enumerate}
   \item $\widetilde{M}_{2\lambda_1+2\lambda_2}(V_1,\tau_1)$ is birational to $\widetilde{M}_{2\lambda_1+2\lambda_2}(V_2,\tau_2)$ such that:
   \begin{itemize}
       \item the birationality preserves the exceptional divisors; 
       \item the birationality commutes with the birational lagrangian fibration structures.
   \end{itemize}
   \item $V_1$ and $V_2$ are isomorphic.
 \end{enumerate}
\end{theorem}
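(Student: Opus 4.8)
The plan is to prove the substantial implication (1) $\Rightarrow$ (2) by converting a birational map into a Hodge isometry of the primitive middle cohomologies of the two cubics and then invoking the global Torelli theorem for cubic fourfolds; the converse (2) $\Rightarrow$ (1) is essentially formal. Throughout write $\widetilde{M}_i=\widetilde{M}_{2\lambda_1+2\lambda_2}(V_i,\tau_i)$ for brevity, and denote by $\widetilde{\Sigma}_{V_i}$ the class of the exceptional divisor and by $b_{V_i}$ the isotropic class pulled back from the base of the lagrangian fibration, as in Example~\ref{example:Pic}.

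For (2) $\Rightarrow$ (1), an isomorphism $\phi\colon V_1\to V_2$ induces an equivalence $\mathcal{A}_{V_1}\simeq\mathcal{A}_{V_2}$ compatible with the semiorthogonal decomposition~(\ref{eqn:D(V)}); since $\phi$ carries lines to lines it fixes the classes $\lambda_1,\lambda_2$, hence $\lambda=2\lambda_1+2\lambda_2$, and it transports $\tau_1$ to a $\lambda$-generic stability condition $\phi_*\tau_1$ on $\mathcal{A}_{V_2}$. This gives an isomorphism $\widetilde{M}_{2\lambda_1+2\lambda_2}(V_1,\tau_1)\iso\widetilde{M}_{2\lambda_1+2\lambda_2}(V_2,\phi_*\tau_1)$, while the wall-and-chamber analysis relating the two $\lambda$-generic chambers of $\phi_*\tau_1$ and $\tau_2$ yields a birational map $\widetilde{M}_{2\lambda_1+2\lambda_2}(V_2,\phi_*\tau_1)\ratl\widetilde{M}_{2\lambda_1+2\lambda_2}(V_2,\tau_2)$. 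Both the exceptional divisor and the intermediate jacobian (hence lagrangian) fibration of \cite[Theorem~1.3]{PertusiCo} are transported naturally by $\phi$, so the composite birational map preserves the exceptional divisors and commutes with the fibration structures.

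For (1) $\Rightarrow$ (2), recall that a birational map between irreducible holomorphic symplectic manifolds is an isomorphism in codimension one and therefore induces a Hodge isometry $g\colon\oH^2(\widetilde{M}_1,\Z)\to\oH^2(\widetilde{M}_2,\Z)$ with respect to the Beauville--Bogomolov--Fujiki forms. The first assumption forces $g(\widetilde{\Sigma}_{V_1})=\widetilde{\Sigma}_{V_2}$, since both are the prime exceptional divisor of the resolution, and the second forces $g(b_{V_1})=b_{V_2}$, since both are the pullback of the polarisation of the base of the fibration. Hence $g$ maps $\langle\widetilde{\Sigma}_{V_1},b_{V_1}\rangle$ isometrically onto $\langle\widetilde{\Sigma}_{V_2},b_{V_2}\rangle$ and consequently restricts to a Hodge isometry of their orthogonal complements. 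By Example~\ref{example:Pic} the orthogonal complement of $\langle\widetilde{\Sigma}_{V_i},b_{V_i}\rangle$ in $\oH^2(\widetilde{M}_i,\Z)$ is precisely the image of $\oH^4(V_i,\Z)_{\operatorname{prim}}$ under the generalised Donaldson embedding of Corollary~\ref{cor:Donaldson} composed with $\pi^*$. Since that embedding is an isometric embedding of Hodge structures (up to the Tate twist relating weight $2$ and weight $4$), we obtain a Hodge isometry $\oH^4(V_1,\Z)_{\operatorname{prim}}\iso\oH^4(V_2,\Z)_{\operatorname{prim}}$. Extending it to the full lattices $\oH^4(V_i,\Z)$ by sending $h^2\mapsto h^2$ and applying the global Torelli theorem for cubic fourfolds \cite{Voisin:Torelli} then yields $V_1\iso V_2$.

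The structural lattice input---that the orthogonal complement is exactly $\oH^4(V_i,\Z)_{\operatorname{prim}}$, primitively embedded and carrying its natural Hodge structure---is already supplied by Corollary~\ref{cor:Donaldson} and Example~\ref{example:Pic}, so the genuinely new work is of two kinds. First one must make the translation of the two geometric hypotheses into the equalities $g(\widetilde{\Sigma}_{V_1})=\widetilde{\Sigma}_{V_2}$ and $g(b_{V_1})=b_{V_2}$ precise, being careful that these distinguished effective, respectively movable isotropic, classes are matched on the nose (and not merely up to sign or scalar). The main obstacle, however, is the final passage through Torelli: one has to verify that the abstract Hodge isometry of the primitive lattices extends to $\oH^4$ compatibly with $h^2$---this amounts to a compatibility of the glueings over the discriminant group $\Z/3$, up to $\pm\mathrm{id}$---so that Voisin's theorem genuinely produces an isomorphism of the underlying cubics rather than only an isometry of their periods.
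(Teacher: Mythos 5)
Your argument for (1) $\Rightarrow$ (2) is essentially identical to the paper's: the birational map induces a Hodge isometry of Beauville--Bogomolov--Fujiki lattices, the two hypotheses force it to preserve $\langle\widetilde{\Sigma}_{V_i},b_{V_i}\rangle$, hence it restricts to a Hodge isometry of the orthogonal complements $\oH^4(V_i,\Z)_{\operatorname{prim}}$ identified via Example~\ref{example:Pic} and Corollary~\ref{cor:Donaldson}, and Voisin's Torelli theorem concludes. The only differences are that you also sketch the (routine) converse, which the paper leaves implicit, and that you explicitly flag the standard point about extending the isometry of primitive lattices over the discriminant group $\Z/3$ before invoking Torelli, which the paper glosses over.
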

\begin{proof}
Given a birational map 
\[ \widetilde{M}_\lambda(V_1,\tau_1)\longrightarrow \widetilde{M}_\lambda(V_2,\tau_2),\]
there is an induced Hodge-isometry 
\begin{equation}\label{eqn:p}
\oH^2(\widetilde{M}_\lambda(V_1,\tau_1),\Z)\longrightarrow\oH^2(\widetilde{M}_\lambda(V_2,\tau_2),\Z). 
\end{equation}
Now, as we saw in Example~\ref{example:Pic}, there is a primitive embedding
\[ \oH^4(V_i,\Z)_{\operatorname{prim}}\hookrightarrow\oH^2(\widetilde{M}_{2\lambda_1+2\lambda_2}(V_i,\tau_i),\Z), \]
and the orthogonal complement of $\oH^4(V_i,\Z)_{\operatorname{prim}}$ in $\oH^2(\widetilde{M}_{2\lambda_1+2\lambda_2}(V_i,\tau_i),\Z)$ is generated by the exceptional divisor and an algebraic isotropic class corresponding to the class of the birational lagrangian fibration structure.

Since by hypothesis these two classes are preserved, the Hodge-isometry (\ref{eqn:p}) restricts to a Hodge-isometry
\[ \oH^4(V_1,\Z)_{\operatorname{prim}}\longrightarrow \oH^4(V_2,\Z)_{\operatorname{prim}} \]
and, by the Torelli theorem for cubic fourfolds (\cite{Voisin:TorelliCubics}), $V_1$ and $V_2$ are isomorphic.
\end{proof}

Finally, we give a lattice-theoretic description of the period of the smooth varieties $\widetilde{M}_\lambda(V,\tau)$.
As in the commutative case, we define the lattice 
\begin{equation}\label{eqn:Gamma V}
\Gamma_\lambda=\left\{\left(x,k\frac{\sigma}{2}\right)\in (\lambda^\perp)^*\oplus\Z\frac{\sigma}{2}\mid k\in2\Z\Leftrightarrow x\in \lambda^\perp\right\}.
\end{equation}
This comes with a natural Hodge structure given by the Hodge structure on $(\lambda^\perp)^*$ and by declaring $\sigma$ to be of type $(1,1)$.
\begin{proposition}\label{prop:Gamma}
The morphism 
$$
f_V\colon \Gamma_\lambda\longrightarrow\operatorname{H}^2(\widetilde{M}_\lambda(V,\tau),\mathbb{Z}),\ 
\left(x,k\frac{\sigma}{2}\right) \mapsto \pi_V^*(\theta_\lambda(x))+\frac{k}{2}\widetilde{\Sigma}_\lambda
$$
where $\widetilde{\Sigma}_\lambda$ is the class of the exceptional divisor, is a Hodge-isometry.
\end{proposition}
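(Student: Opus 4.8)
The plan is to run exactly the deformation argument used for Proposition~\ref{prop:v perp}, upgraded so as to keep track of the exceptional divisor. A priori $f_V$ is only a morphism of $\Q$-vector spaces $\Gamma_\lambda\to\oH^2(\widetilde{M}_\lambda(V,\tau),\Q)$, so there are four things to check: that $f_V$ is a morphism of Hodge structures, that it is integral, that it is an isometry, and that it is bijective. The Hodge property is essentially free: on the summand $\lambda^\perp$ the map $f_V$ is $\pi_V^*\circ\theta_\lambda$, a composition of morphisms of Hodge structures by Proposition~\ref{prop:v perp} and Lemma~\ref{lemma: pi injective}, while $\sigma$ and $\widetilde{\Sigma}_\lambda$ are both of type $(1,1)$. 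The remaining three assertions are lattice-theoretic and I would transport them from a K3 fibre.

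First I would set up the same family as in Proposition~\ref{prop:v perp}: a deformation $\mathcal{V}\to(B,0)$ of $V=\mathcal{V}_0$ over a connected curve such that some fibre $\mathcal{V}_b$ has an associated K3 surface $S$ with $\operatorname{D}^b(S)\simeq\mathcal{A}_{\mathcal{V}_b}$, together with the relative good moduli space $p\colon\mathcal{M}\to B$, a locally trivial deformation of $M_\lambda(V,\tau)$ whose fibre over $b$ is a K3 moduli space $M_\lambda(S,\tau)$. Since the desingularisation is the blow-up of the (reduced) singular locus, this construction is compatible with locally trivial deformations and yields a relative symplectic resolution $\tilde{p}\colon\widetilde{\mathcal{M}}\to B$. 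The key geometric input, exactly as in the proof of Proposition~\ref{prop:Gamma MZ}, is that the exceptional divisors of the fibres assemble into a flat section $\tilde\sigma$ of the local system $R^2\tilde{p}_*\Z$ (this is where \cite[Proposition~2.16]{PR13} enters). Together with the fact that $\theta'_\lambda$ deforms in a local system (already used in Proposition~\ref{prop:v perp}), the fibrewise formula for $f_V$ then assembles into a morphism of local systems $f_B\colon\widetilde{\Gamma}_\lambda\to R^2\tilde{p}_*\Q$, where $\widetilde{\Gamma}_\lambda$ is the constant local system with stalk $\Gamma_\lambda$.

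I would then conclude by propagation along the connected base $B$. Over the point $b$ the morphism $f_b$ agrees with the Hodge-isometry $\Gamma_\lambda\xrightarrow{\sim}\oH^2(\widetilde{M}_\lambda(S,\tau),\Z)$ furnished by Proposition~\ref{prop:Gamma MZ}(3) (or Remark~\ref{rmk:twisted} if one lands on a twisted K3 surface); in particular $f_b$ is integral, an isometry, and bijective. Each of these properties is locally constant for a flat section of the integral local system $R^2\tilde{p}_*\Z$: integrality propagates because parallel transport preserves the integral lattice, so a flat $\Q$-section that is integral at $b$ is integral everywhere and $f_B$ factors through $R^2\tilde{p}_*\Z$; the isometry property propagates because the Beauville--Bogomolov--Fujiki form is a flat (deformation invariant) form, so that the equation ``$f_B$ preserves the form'' is a locally constant condition; and, once integrality is known, bijectivity propagates because the cokernel of $f_B$ is then a locally constant finite group, trivial at $b$. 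Specialising to $0$ gives that $f_V$ is integral, bijective, and an isometry, and combined with the Hodge property established above, $f_V$ is a Hodge-isometry.

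The main obstacle is the geometric input underlying the whole argument, namely that the relative blow-up $\tilde{p}\colon\widetilde{\mathcal{M}}\to B$ is again a locally trivial family whose relative exceptional divisor defines a monodromy-invariant (flat) class $\tilde\sigma$ in $R^2\tilde{p}_*\Z$. This is precisely the content imported from \cite[Proposition~2.16]{PR13} and from the setup in the proof of Proposition~\ref{prop:Gamma MZ}, and it encodes the nontrivial facts that $\widetilde{\Sigma}_\lambda$ has square $-6$ and that the cross terms in the isometry---the orthogonality of $\widetilde{\Sigma}_\lambda$ to $\pi_V^*\theta_\lambda(\lambda^\perp)$, together with the divisibility responsible for integrality on the odd part of $\Gamma_\lambda$---are the correct ones; all of these are read off at the K3 fibre and transported. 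Once the flatness of $\tilde\sigma$ is granted, every remaining step is the formal propagation described above.
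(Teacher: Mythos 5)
Your proposal follows exactly the paper's argument: deform the cubic fourfold along a connected base to one with an associated K3 surface, pass to the relative moduli space and its relative symplectic resolution, extend $f_V$ to a morphism of local systems $\widetilde{\Gamma}_\lambda\to R^2\tilde{p}_*\Z$ using the flatness of the exceptional divisor class, and conclude at the K3 fibre via Proposition~\ref{prop:Gamma MZ}. The paper's proof is terser but identical in structure; your added detail on why integrality, the isometry property, and bijectivity propagate along $B$ is a correct elaboration of the same argument.
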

\proof
The proof is as in Proposition~\ref{prop:Gamma MZ}.
By definition, $f_V$ respects the Hodge structures. Let now $p\colon\mathcal{M}\to B$ be a family of singular LPZ varieties induced by a family of cubic fourfolds as in \cite[Proposition~3.7]{PertusiCo}, and $\tilde{p}\colon\widetilde{\mathcal{M}}\to B$ the associated family of LPZ manifolds.
If $\widetilde{\Gamma}_\lambda$ denotes the trivial local system on $B$ with stalk $\Gamma_\lambda$, then the morphism $f_V$ extends to a morphism of local systems
$$\widetilde{\Gamma}_\lambda\longrightarrow R^2\tilde{p}_*\mathbb{Z}.$$
Therefore, it is enough to prove the claim for a point of $B$. If we choose the family $B$ such that there exists a point $b\in B$ such that the corresponding cubic fourfold is Pfaffian, then the result follows from Proposition~\ref{prop:Gamma MZ}.
\endproof

Recall that a locally factorial variety is a variety such that any Weil divisor is Cartier. If $m$ is an integer, then a variety is $m$-factorial if for any Weil divisor $D$ there exists $k\leq m$ such that $kD$ is Cartier.

\begin{corollary}\label{cor:factoriality}
The moduli space $M_{\lambda}(V,\tau)$ is either locally factorial or $2$-factorial. More precisely, if we write $\lambda=2\lambda_0$:
\begin{itemize}
    \item $M_{\lambda}(V,\tau)$ is locally factorial if and only if $(\lambda_0,u)\in 2\Z$ for every $u\in\widetilde{\oH}(\mathcal{A}_V)^{1,1}$;
    \item $M_{\lambda}(V,\tau)$ is $2$-factorial if and only if there exists $u\in\widetilde{\oH}(\mathcal{A}_V)^{1,1}$ such that $(\lambda_0,u)=1$.
\end{itemize}
\end{corollary}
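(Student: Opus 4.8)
The plan is to compare the two groups that govern factoriality: the Picard group $\Pic(M)$ of Cartier classes and the class group $\Cl(M)$ of Weil divisors, and to show that their quotient is a subgroup of a group of order two whose Hodge type is dictated by the divisibility of $(\lambda_0,-)$ on the algebraic Mukai lattice. Write $M=M_\lambda(V,\tau)$, $\widetilde M=\widetilde M_\lambda(V,\tau)$, and let $\pi_V\colon\widetilde M\to M$ be the symplectic resolution with (irreducible) exceptional divisor $\widetilde\Sigma_\lambda$. First I would identify the class group via the resolution. Since $M$ is normal with $\codim_M\Sing M\geq 2$ and $\pi_V$ restricts to an isomorphism over $U=M\setminus\Sing M\cong\widetilde M\setminus\widetilde\Sigma_\lambda$, excision gives $\Cl(M)=\Cl(U)=\Pic(U)$, and removing the irreducible divisor $\widetilde\Sigma_\lambda$ yields $\Pic(U)\cong\Pic(\widetilde M)/\Z[\widetilde\Sigma_\lambda]$ (the class $[\widetilde\Sigma_\lambda]$ is nonzero since $\sigma^2=-6$). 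By Proposition~\ref{prop:Gamma} the isometry $f_V$ identifies $\Pic(\widetilde M)=\oH^{1,1}(\widetilde M,\Z)$ with $\Gamma_\lambda^{1,1}$ and sends $\widetilde\Sigma_\lambda$ to $(0,\sigma)$, while Corollary~\ref{cor:Pic} identifies $\Pic(M)=(\lambda^\perp)^{1,1}$; under these the inclusion $\pi_V^*\colon\Pic(M)\hookrightarrow\Cl(M)$ becomes $x\mapsto[(x,0)]$.

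Then I would compute the quotient at the level of $\Gamma_\lambda$. Since $(0,\sigma)=(0,2\cdot\tfrac{\sigma}{2})$, modulo $\Z(0,\sigma)$ the class of $(x,k\tfrac{\sigma}{2})$ depends only on $x$ and on $k\bmod 2$; but the defining condition of $\Gamma_\lambda$ forces $k\bmod 2$ to be determined by $x$ (namely $k$ is even precisely when $x\in\lambda^\perp$). Hence $(x,k\tfrac{\sigma}{2})\mapsto x$ induces an isomorphism $\Gamma_\lambda/\Z(0,\sigma)\xrightarrow{\ \sim\ }(\lambda^\perp)^*$, compatible with Hodge structures as $\sigma$ is of type $(1,1)$. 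Taking $(1,1)$-parts gives
\[
\Cl(M)\cong\big((\lambda^\perp)^*\big)^{1,1},\qquad \Cl(M)/\Pic(M)\cong\big((\lambda^\perp)^*\big)^{1,1}/(\lambda^\perp)^{1,1},
\]
and the latter injects into the discriminant group $(\lambda^\perp)^*/\lambda^\perp$.

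Finally I would run the lattice computation. Because $\lambda=2\lambda_0$ one has $\lambda^\perp=\lambda_0^\perp$, and since $\widetilde{\oH}(\mathcal{A}_V)$ is unimodular and $\lambda_0$ is primitive with $\lambda_0^2=2$, the group $(\lambda_0^\perp)^*/\lambda_0^\perp$ is isomorphic to $\Z/2\Z$, generated by $g=w-\tfrac12\lambda_0$ for any $w\in\widetilde{\oH}(\mathcal{A}_V)$ with $(w,\lambda_0)=1$ (such $w$ exists by unimodularity and primitivity; note $(g,\lambda_0)=0$ and $2g\in\lambda^\perp$ while $g\notin\lambda^\perp$). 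This already yields the dichotomy: $\Cl(M)/\Pic(M)$ is either trivial, so $M$ is locally factorial, or all of $\Z/2\Z$, so $2\,\Cl(M)\subseteq\Pic(M)$ and $M$ is $2$-factorial. To decide which, observe that the freedom in choosing $g$ is precisely translation by $\lambda_0^\perp=\lambda^\perp$, and that $\lambda_0$ is of type $(1,1)$; hence the nonzero class admits a $(1,1)$ representative if and only if $w$ can be chosen in $\widetilde{\oH}(\mathcal{A}_V)^{1,1}$, i.e.\ if and only if there is $u\in\widetilde{\oH}(\mathcal{A}_V)^{1,1}$ with $(u,\lambda_0)=1$. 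Since $\lambda_0^2=2$ shows $\lambda_0\in\widetilde{\oH}(\mathcal{A}_V)^{1,1}$ already achieves the value $2$, the set of values $(u,\lambda_0)$ for $u\in\widetilde{\oH}(\mathcal{A}_V)^{1,1}$ is either $\Z$ or $2\Z$, which are exactly the two stated cases.

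The step I expect to be the main obstacle is the careful bookkeeping in the identification $\Cl(M)\cong\Pic(\widetilde M)/\Z[\widetilde\Sigma_\lambda]$ as \emph{Hodge} lattices: one must check that every integral $(1,1)$ class on $U$ arises from a Weil divisor and that excising the irreducible $\widetilde\Sigma_\lambda$ quotients the Néron--Severi group by exactly its class, with no stray torsion or continuous part (the latter vanishing as $\widetilde M$ is irreducible holomorphic symplectic, so $\oH^1=0$). This runs in parallel to Perego--Rapagnetta \cite{PR:Factoriality} and uses that $M$ has rational singularities; the subsequent discriminant-group and Hodge-type analysis is then a routine lattice computation.
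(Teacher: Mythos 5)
Your argument is correct and follows essentially the same route as the paper: identify $\Pic(M_\lambda(V,\tau))$ with $(\lambda^\perp)^{1,1}$ via Corollary~\ref{cor:Pic}, identify the Weil class group with $\Gamma_\lambda^{1,1}/\Z\sigma\cong\bigl((\lambda^\perp)^*\bigr)^{1,1}$ via Proposition~\ref{prop:Gamma} and the excision sequence for the resolution, and reduce to the order-two discriminant group of $\lambda_0^\perp$. The only difference is that you carry out explicitly the final lattice-theoretic computation (the generator $w-\tfrac{1}{2}\lambda_0$ and the Hodge-type analysis of its coset), which the paper delegates to \cite[Section~4.1]{PR:Factoriality}; this is exactly the computation performed there.
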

\begin{proof}
The proof is the same as the proof of \cite[Theorem~1.1]{PR:Factoriality} (see \cite[Section~4.1]{PR:Factoriality}), using our Proposition~\ref{prop:v perp} and Proposition~\ref{prop:Gamma} instead of \cite[Theorem~1.7]{PR13} and \cite[Theorem~3.1]{PR:Factoriality}, respectively. Let us recall here the main steps for the reader's convenience. 

First of all, if $A^1(M_\lambda(V,\tau))$ denotes the Weil class group, then we need to compute the quotient $A^1(M_\lambda(V,\tau))/\Pic(M_\lambda(V,\tau))$. Now, by Corollary~\ref{cor:Pic}, we have that $\Pic(M_\lambda(V,\tau))\cong(\lambda^\perp)^{1,1}$. On the other hand, since the singularities of $M_\lambda(V,\tau)$ are in codimension $2$, we have that $A^1(M_\lambda(V,\tau))\cong\Pic(M^s_\lambda(V,\tau))$. There is a short exact sequence
\[
0\to\Z\to\Pic(\widetilde{M}_\lambda(V,\tau))\to\Pic(M^s_\lambda(V,\tau))\to0,
\]
where the first map is defined by mapping $1$ to the class $\widetilde{\Sigma}_\lambda$ of the exceptional divisor and the second map is the restriction. Combining this with Proposition~\ref{prop:Gamma}, we get 
\[ 
A^1(M_\lambda(V,\tau))\cong \frac{\Gamma_\lambda^{1,1}}{\Z\sigma},
\]
and eventually
\[
\frac{A^1(M_\lambda(V,\tau))}{\Pic(M_\lambda(V,\tau))}\cong\frac{\Gamma_\lambda^{1,1}}{(\lambda^\perp)^{1,1}\oplus\Z\sigma}.
\]
The proof is now reduced to a lattice-theoretic computation.
\end{proof}

\begin{example}
If $\lambda_0=\lambda_1+\lambda_2$, then $(\lambda_0,\lambda_1)=1$. Therefore the moduli space $M_{2\lambda_1+2\lambda_2}(V,\tau)$ of Example~\ref{example:LPZ} is $2$-factorial.
\end{example}

\section{When is an LPZ variety birational to a moduli space of sheaves?}
 
In the following we denote by $\mathcal{C}$ the moduli space of smooth cubic fourfolds and by $\mathcal{C}_d$ the irreducible (Hassett) divisor consisting of special cubic fourfolds of discriminant $d$. Recall that $\mathcal{C}_d$ is non-empty if and only if $d>6$ and $d\equiv 0,2\pmod{6}$ (see \cite[Theorem~4.3.1]{Hassett}). 

We consider the following two properties for $d$
\begin{center}
    $(**)$: $d$ divides $2n^2 + 2n +2$ for some $n\in \Z$;\\
    $(**')$: in the prime factorization of $d/2$, primes $p\equiv 2 (3)$ appear with even exponents.
\end{center}

As it has been well summarized in  \cite[Proposition~1.13, Proposition~1.24, Corollary~1.26]{Huybrechts:Gargnano}, from work of Hassett, Addington, Addington and Thomas, and Huybrechts \cite{Addington:Fano, Add-Thomas, Hassett,huybrechts-twisted}, the first condition is equivalent to the existence of an associated K3 surface; the second condition is equivalent to the existence of an associated twisted K3 surface.

A birational map $f\colon X\dashrightarrow Y$ between singular LPZ varieties is called \emph{stratum preserving} if it is defined at the generic point of the singular locus of $X$ and maps it to the generic point of the singular locus of $Y$. Since LPZ varieties admit an irreducible symplectic desingularisation, this is equivalent to requiring that there exists a birational map between the desingularisations preserving the exceptional divisors.

\begin{proposition}\label{thm:main}
For any smooth cubic fourfold $V$, we consider the singular LPZ variety $M_\lambda(V,\tau)$, where $\lambda=2\lambda_0$, $\lambda_0^2=2$ and $\tau$ is a generic stability condition on the Kuznetsov component $\mathcal{A}_V$.
\begin{enumerate}
\item The following conditions are equivalent:
 \begin{enumerate}
   \item $M_\lambda(V,\tau)$ is stratum preserving birational to a moduli space $M_v(S,H)$ of sheaves on some projective K3 surface $S$;
   \item $V\in \mathcal{C}_d$ for some $d$ satisfying $(**)$.
 \end{enumerate}
\item The following conditions are equivalent:
 \begin{enumerate}
    \item $M_\lambda(V,\tau)$ is stratum preserving birational to a moduli space $M_v(S,\alpha, H)$ of twisted sheaves on some twisted K3 surface $S$;
    \item if $V\in \mathcal{C}_d$ for some $d$ satisfying $(**')$.
 \end{enumerate}
\end{enumerate} 
\end{proposition}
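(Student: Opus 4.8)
The plan is to translate the geometric statement into the existence of a Hodge isometry between the relevant period lattices, and then to match this with the existence of an associated (twisted) K3 surface, which by the Hassett--Addington--Thomas--Huybrechts dictionary recalled above is equivalent to the conditions $(**)$ and $(**')$. As noted before the statement, a stratum preserving birational map $M_\lambda(V,\tau)\dashrightarrow M_v(S,H)$ is the same datum as a birational map $\widetilde{M}_\lambda(V,\tau)\dashrightarrow\widetilde{M}_v(S,H)$ between the symplectic desingularisations preserving the exceptional divisors, so I would phrase everything on the smooth OG10 models.

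First I would carry out the reduction to lattice data. A birational map between irreducible holomorphic symplectic manifolds induces a Hodge isometry of their second cohomology lattices, and preserving the exceptional divisors means that this isometry sends $\widetilde{\Sigma}_\lambda$ to $\widetilde{\Sigma}_v$. Through the identifications $\oH^2(\widetilde{M}_\lambda(V,\tau),\Z)\cong\Gamma_\lambda$ and $\oH^2(\widetilde{M}_v(S,H),\Z)\cong\Gamma_v$ of Proposition~\ref{prop:Gamma} and Proposition~\ref{prop:PR}, under which $\sigma$ is the class of the exceptional divisor, such a map becomes a Hodge isometry $\Gamma_\lambda\cong\Gamma_v$ with $\sigma\mapsto\sigma$. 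A direct computation gives $\sigma^\perp\cap\Gamma_\lambda=\lambda^\perp$ and $\sigma^\perp\cap\Gamma_v=v^\perp$, and conversely any Hodge isometry $\lambda^\perp\cong v^\perp$ extends to a Hodge isometry $\Gamma_\lambda\cong\Gamma_v$ fixing $\sigma$; hence the whole problem is equivalent to the existence of a Hodge isometry $\lambda^\perp\cong v^\perp$. Since $\lambda=2\lambda_0$ and $v=2w$, we have $\lambda^\perp=\lambda_0^\perp$ and $v^\perp=w^\perp$, so I must decide for which $V$ there exist a K3 surface $S$ and a primitive algebraic $w$ with $w^2=2$ admitting a Hodge isometry $\lambda_0^\perp\cong w^\perp$; the twisted statement of item (2) is the same, with Remark~\ref{rmk:twisted} replacing Proposition~\ref{prop:PR}.

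For the implication (b)$\Rightarrow$(a) I would use that $d$ satisfying $(**)$ means $V$ has an associated K3 surface, i.e.\ there is an equivalence $\mathcal{A}_V\simeq\Db^b(S)$ and hence a Hodge isometry $\widetilde{\oH}(\mathcal{A}_V)\cong\widetilde{\oH}(S,\Z)$; sending $\lambda_0$ to its image $w$ produces the Hodge isometry $\lambda_0^\perp\cong w^\perp$ of the previous step. After applying a Fourier--Mukai equivalence $\Db^b(S)\simeq\Db^b(S')$ taking $w$ to a class $w'$ for which a $2w'$-generic polarisation $H$ on $S'$ exists, the Gieseker moduli space $M_{2w'}(S',H)$ is defined, is of type OG10, and has period $\Gamma_{2w'}\cong\Gamma_\lambda$ by Proposition~\ref{prop:PR}; the global Torelli theorem for manifolds of type OG10 then upgrades this Hodge isometry (which fixes $\sigma$) to a birational map of desingularisations preserving the exceptional divisors. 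Conversely, for (a)$\Rightarrow$(b), the Hodge isometry $\lambda_0^\perp\cong w^\perp$ restricts to a Hodge isometry of transcendental lattices $T_{\mathcal{A}_V}\cong T_S$; since $T_S$ embeds primitively into the K3 lattice $U^{\oplus3}\oplus E_8(-1)^{\oplus2}$, so does $T_{\mathcal{A}_V}$, and the existence of a projective K3 surface with transcendental lattice Hodge-isometric to $T_{\mathcal{A}_V}$ is, by the Addington--Thomas criterion, equivalent to the existence of a primitive embedding $U\hookrightarrow\widetilde{\oH}^{1,1}(\mathcal{A}_V)$, hence to $(**)$. Item (2) follows verbatim, starting from an associated twisted K3 surface, invoking the twisted theory of Remark~\ref{rmk:twisted}, and replacing the primitive embedding of $U$ by one of $U(k)$, which by \cite{huybrechts-twisted,Huybrechts:Gargnano} corresponds to $(**')$.

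I expect the main obstacle to be the Torelli step: turning the abstract Hodge isometry $\Gamma_\lambda\cong\Gamma_{2w'}$ into an honest birational map requires that it be a parallel transport operator with the correct behaviour on the positive and movable cones, which one arranges by composing with elements of the monodromy group of OG10 acting on the algebraic part while fixing $\sigma$. It is exactly the need to keep $\sigma$ fixed through these compositions that forces the stratum preserving hypothesis, and this is more delicate in the twisted case; this is why item (2) is only claimed under that hypothesis. A secondary point is to choose the Fourier--Mukai transform in the backward direction so that $w$ lands in a Gieseker chamber while the identification of singular loci (and hence the stratum preserving property) is respected.
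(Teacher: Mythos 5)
Your reduction to a Hodge isometry $\Gamma_\lambda\cong\Gamma_v$ fixing $\sigma$, and hence to one $\lambda^\perp\cong v^\perp$, matches the paper, as does the direction (b)$\Rightarrow$(a): an equivalence $\mathcal{A}_V\simeq\Db^b(S)$ (resp.\ $\Db^b(S,\alpha)$) transports $\lambda$ to a class $v$, which after shifting and dualising may be assumed positive so that Yoshioka guarantees $M_v(S,H)\neq\emptyset$, and the Global Torelli theorem for OG10 converts the resulting isometry of periods into a stratum preserving birational map. For the converse in item (1) you depart from the paper: you discard the full Hodge isometry $\lambda^\perp\cong v^\perp$ and keep only its restriction to transcendental lattices, then re-embed. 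That route is viable for untwisted K3 surfaces --- it is exactly the argument the paper uses later for Theorem~\ref{thm:qualsiasi}, where it is forced on one because without the stratum preserving hypothesis only the transcendental part survives --- but it rests on the uniqueness of the primitive embedding $T\hookrightarrow U^{\oplus4}\oplus E_8(-1)^{\oplus2}$ (\cite[Theorem~1.14.4]{Nik79}), whose hypothesis $\ell(A_T)\le\operatorname{rk}(T^\perp)-2$ is checked using the unimodular plane $U=\oH^0\oplus\oH^4$ sitting inside $T(S)^\perp$; you should make this step explicit rather than folding it into ``the Addington--Thomas criterion''.

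The genuine gap is your claim that item (2) ``follows verbatim''. For a twisted K3 surface $(S,\alpha)$ the orthogonal complement of $T(S,\alpha)$ in the Mukai lattice contains only a copy of $U(k)$, not of $U$, so the bound on $\ell(A_T)$ needed for Nikulin's uniqueness is not available; more fundamentally, a Hodge isometry of transcendental lattices is not known to force a Hodge isometry of twisted Mukai lattices (this is precisely \cite[Remark~4.10]{HuySte}, quoted in the paper's remark after Theorem~\ref{thm:qualsiasi}). So your (a)$\Rightarrow$(b) argument breaks down exactly in the twisted case. The paper's proof avoids this by exploiting the stratum preserving hypothesis more fully: the isometry $\lambda^\perp\cong v^\perp$ it produces is between lattices whose discriminant group is $\Z/2\Z$, hence acts trivially on discriminant groups and extends by \cite[Corollary~1.5.2]{Nik79} to a Hodge isometry $\Ktop(\mathcal{A}_V)\cong\Ktop(S)$ (resp.\ $\Ktop(S,\alpha)$) sending $\lambda$ to $v$; this works uniformly in the twisted and untwisted settings and is the step where the stratum preserving hypothesis is actually consumed --- not, as your closing remarks suggest, the monodromy bookkeeping in the Torelli step.
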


\proof
{\bf (1)} By \cite[Proposition~1.13, Proposition~1.24, Corollary~1.26]{Huybrechts:Gargnano}, if $V\in\mathcal{C}_d$ with $d$ satisfying $(**)$, then there exist a polarised K3 surface $(S,H)$ and an equivalence of categories $\Phi\colon \mathcal{A}_V\to D^b(S)$. This induces a Hodge-isometry $\Phi^K\colon \Ktop(\mathcal{A}_V) \to \Ktop(S)\cong \widetilde{\oH}(S,\mathbb Z)$. If $v:=\Phi^K(\lambda)$,
then clearly we have that the restriction
$$
\Phi^K\colon\lambda^\perp\longrightarrow v^\perp
$$
is a Hodge-isometry as well.

Let us write $v=(v_0,v_2,v_4)\in\widetilde{\oH}(S,\mathbb Z)$.
Without loss of generality, after possibly shifting and taking duals, we may assume that $v_0\geq 0$ and $v_2$ is a non-negative multiple of the ample class $H$. In particular, $v$ is a positive Mukai vector and, by a result of Yoshioka (cf.\ \cite{Yoshioka:ModuliSpaces}) the moduli space $M_v(S,H)$ is non-empty. By Proposition~\ref{prop:v perp} and Proposition~\ref{prop:PR}, we eventually get an isometry
$$\phi\colon\oH^2(M_\lambda(V,\tau),\Z)\cong\lambda^\perp\longrightarrow v^\perp\cong\oH^2(M_v(S,H),\Z)$$
that is an isomorphism of Hodge structures. Now, passing to the symplectic resolutions $\widetilde{M}_\lambda(V,\tau)$ and $\widetilde{M}_v(S,H)$, we get a natural Hodge-isometry
$$\tilde{\phi}\colon \oH^2(\widetilde{M}_\lambda(V,\tau),\Z)\cong\Gamma_\lambda\longrightarrow \Gamma_v\cong\oH^2(\widetilde{M}_v(S,H),\Z)$$
obtained by sending the classes of the exceptional divisors into each other. By the Global Torelli Theorem for manifolds of type OG10 (see \cite[Introduction]{Ono}), we have then that $\widetilde{M}_\lambda(V,\sigma)$ and $\widetilde{M}_v(S,H)$ are birational. Therefore $M_\lambda(V,\tau)$ and $M_v(S,H)$ are also birational and, by construction, the birationality is stratum preserving.

For the converse, assume that $M_\lambda(V,\tau)$ is stratum preserving birational to a Gieseker moduli space $M_v(S,H)$ on a K3 surface $S$; in particular $v = 2w$ with $w^2 = 2$. The birational morphism from $M_\lambda(V,\tau)$ to $M_v(S,H)$ extends to a birational morphism between the desingularisations $\widetilde{M}_\lambda(V,\tau)$ and $\widetilde{M}_v(S,H)$. Since the last two varieties are smooth symplectic varieties, the birational morphism induces a Hodge-isometry on the Beauville--Bogomolov--Fujiki lattices., i.e.\ 
$$\oH^2(\widetilde{M}_\lambda(V,\tau),\Z)\cong\oH^2(\widetilde{M}_v(S,H),\Z)$$ 
and by hypothesis this isometry sends the class of the exceptional divisor to the class of the exceptional divisor. In particular it restricts to a Hodge-isometry 
$$\oH^2(M_\lambda(V,\tau), \mathbb Z) \simeq \oH^2(M_v(S,H), \mathbb Z)$$ 
(cf.\ Proposition~\ref{prop:Gamma} and item $(3)$ of Proposition~\ref{prop:PR}).
By Proposition~\ref{prop:v perp} and item $(2)$ of Proposition~\ref{prop:PR} we get that
$$
K_{\mathrm{top}}(\mathcal{A}_V)\supset \lambda^\perp \simeq \oH^2(M_\lambda(V,\tau), \mathbb Z) \simeq \oH^2(M_v(S,H), \mathbb Z) \simeq v^\perp \subset \widetilde{\oH}(S,\mathbb Z).
$$
Now, since the lattices $\lambda^\perp$ and $v^\perp$ have discriminant group isomorphic to $\Z/2\Z$, this Hodge-isometry must act as the identity on the discriminant group and, by \cite[Corollary~1.5.2]{Nik79}, it extends to a Hodge-isometry $K_{\mathrm{top}}(\mathcal{A}_V) \simeq \widetilde{\oH}(S,\mathbb Z)$ (sending $\lambda$ to $v$).
Finally, $V\in \mathcal{C}_d$ for some $d$ satisfying $(**)$ by \cite[Proposition~1.13, Proposition~1.24]{Huybrechts:Gargnano}.

{\bf (2)} The proof is very similar to the case before, we only remark on the subtle differences. First of all, by \cite[Proposition~1.13, Proposition~1.24]{Huybrechts:Gargnano} and \cite[Proposition~33.1]{BLNPS}, $Y\in\mathcal{C}_d$ with $d$ satisfying $(**')$ if and only if there exist a twisted K3 surface $(S,\alpha)$ and an equivalence of categories $\Phi\colon \mathcal{A}_V\to D^b(S,\alpha)$. Such an equivalence of categories induces a Hodge-isometry $\Phi^K\colon K_{\mathrm{top}}(\mathcal{A}_V) \to \Ktop(S,\alpha)$. Put $v=\Phi^K(\lambda)$ and consider the moduli space $M_v(S,\alpha, H)$. For the non-emptiness of $M_v(S,\alpha)$ one uses \cite{Yoshioka:TwistedSheaves}. The rest of the argument and the reverse implication follow verbatim as in the part $(1)$ of the proof (cf.\ Remark~\ref{rmk:twisted}).
\endproof

Using a lattice-theoretic trick, we can remove the stratum preserving hypothesis, at least in the untwisted case.
\begin{theorem}\label{thm:qualsiasi}
For any smooth cubic fourfold $V$, we consider the singular LPZ variety $M_\lambda(V,\tau)$, where $\lambda=2\lambda_0$, $\lambda_0^2=2$ and $\tau$ is a generic stability condition on the Kuznetsov component $\mathcal{A}_V$. The following conditions are equivalent:
 \begin{enumerate}
   \item $M_\lambda(V,\tau)$ is birational to a moduli space $M_v(S,H)$ of sheaves on some projective K3 surface $S$;
   \item $V\in \mathcal{C}_d$ for some $d$ satisfying $(**)$.
 \end{enumerate}
\end{theorem}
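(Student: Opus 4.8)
The implication $(2)\Rightarrow(1)$ requires no new argument: if $V\in\mathcal C_d$ with $d$ satisfying $(**)$, then Proposition~\ref{thm:main}(1) produces a \emph{stratum preserving} birational map between $M_\lambda(V,\tau)$ and a Gieseker moduli space $M_v(S,H)$, which is in particular a birational map. The whole content of the statement is thus the converse $(1)\Rightarrow(2)$, for which we are handed a birational map with no control on the singular loci.

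The plan for $(1)\Rightarrow(2)$ is to recover, by purely lattice-theoretic means, the input that the stratum preserving hypothesis fed into Proposition~\ref{thm:main}(1). First I would lift the birational map to the desingularisations. As both symplectic resolutions are blow-ups of the reduced singular loci, a birational map $M_\lambda(V,\tau)\ratl M_v(S,H)$ induces a birational map $\widetilde M_\lambda(V,\tau)\ratl\widetilde M_v(S,H)$ of irreducible holomorphic symplectic manifolds of type OG10. Birational irreducible holomorphic symplectic manifolds have Hodge-isometric second cohomology, so by Proposition~\ref{prop:Gamma} and item~(3) of Proposition~\ref{prop:PR} I obtain a Hodge-isometry $g\colon\Gamma_\lambda\xrightarrow{\sim}\Gamma_v$. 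The essential point is that, lacking the stratum preserving assumption, there is no reason for $g$ to send the class $\sigma_\lambda$ of the exceptional divisor of $\widetilde M_\lambda(V,\tau)$ to the class $\sigma_v$ of the exceptional divisor of $\widetilde M_v(S,H)$.

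The lattice-theoretic trick is to restrict $g$ to transcendental lattices, where the ambiguity in $\sigma$ disappears. By the very definition~(\ref{eqn:Gamma V}) of $\Gamma_\lambda$ the orthogonal complement of $\sigma_\lambda$ is $\lambda^\perp$, and similarly $\sigma_v^\perp=v^\perp$ in $\Gamma_v$; since $\sigma_\lambda$ and $\sigma_v$ are algebraic, the transcendental lattice of $\Gamma_\lambda$ (resp.\ $\Gamma_v$) coincides with that of $\lambda^\perp$ (resp.\ $v^\perp$). Now $v=2w$ with $w$ algebraic gives $v^\perp=w^\perp$, whose transcendental lattice is the transcendental lattice $T(S)$ of the K3 surface $S$, and likewise the transcendental lattice of $\lambda^\perp=\lambda_0^\perp$ is the transcendental lattice $T(\mathcal A_V)$ of the Kuznetsov component. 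Therefore $g$ restricts to a Hodge-isometry
\[
T(\mathcal A_V)\ \xrightarrow{\ \sim\ }\ T(S)
\]
regardless of where it sends $\sigma_\lambda$.

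It then remains to convert this Hodge-isometry between $T(\mathcal A_V)$ and the transcendental lattice of a genuine K3 surface into the statement $(**)$. Here I would exploit that $\widetilde{\oH}(\mathcal A_V)$ and $\widetilde{\oH}(S,\Z)$ are both isometric to the unimodular lattice $U^{\oplus4}\oplus E_8(-1)^{\oplus2}$. One can finish in two equivalent ways: either extend the transcendental Hodge-isometry directly to a Hodge-isometry of the full Mukai lattices by Nikulin's theory of primitive embeddings into unimodular lattices (\cite{Nik79}); or first upgrade it to a Hodge-isometry $\lambda^\perp\cong v^\perp$ --- both being indefinite lattices of rank $23$, signature $(3,20)$ and discriminant group $\Z/2\Z$, hence isometric, the isometry being automatically Hodge since it matches the transcendental parts and the algebraic parts are of type $(1,1)$ --- and then reuse verbatim the extension step of Proposition~\ref{thm:main}(1). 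Either way one arrives at a Hodge-isometry $\widetilde{\oH}(\mathcal A_V)\cong\widetilde{\oH}(S,\Z)$, from which $\mathcal A_V\simeq D^b(S)$ and $V\in\mathcal C_d$ with $d$ satisfying $(**)$ by \cite{Add-Thomas} and \cite[Proposition~1.13, Proposition~1.24, Corollary~1.26]{Huybrechts:Gargnano}. I expect the extension of the transcendental isometry to be the delicate point: one must ensure that the algebraic orthogonal complements lie in a single genus and that the maps induced on discriminant groups are compatible with the gluing. It is exactly here that unimodularity of the Mukai lattice is used, and its failure in the twisted setting is why the stratum preserving hypothesis cannot be dropped in Proposition~\ref{thm:main}(2).
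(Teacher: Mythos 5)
Your proposal is correct and follows essentially the same route as the paper: lift the birational map to the OG10 resolutions, extract a Hodge-isometry $\Gamma_\lambda\cong\Gamma_v$, restrict to transcendental lattices to kill the ambiguity in the exceptional class, and then use Nikulin's theory of primitive embeddings into the unimodular Mukai lattice (the paper invokes \cite[Theorem~1.14.4]{Nik79}, using that $T(S)^\perp\subset\widetilde{\oH}(S,\Z)$ contains a hyperbolic plane $U$) to produce a Hodge-isometry $\widetilde{\oH}(\mathcal{A}_V)\cong\widetilde{\oH}(S,\Z)$ and conclude via \cite{Huybrechts:Gargnano}. The first of your two suggested finishing arguments is exactly the paper's.
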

\begin{proof}
One direction follows from Proposition~\ref{thm:main}. So let us suppose that $M_\lambda(V,\tau)$ is birational to a moduli space $M_v(S,H)$ of semistable sheaves on a projective K3 surface $S$. In particular, the desingularised moduli spaces $\widetilde{M}_\lambda(V,\tau)$ and $\widetilde{M}_v(S,H)$ are birational and hence we have an induced Hodge-isometry
$$\Gamma_\lambda=\oH^2(\widetilde{M}_\lambda(V,\tau),\Z)\cong\oH^2(\widetilde{M}_v(S,H),\Z)=\Gamma_v.$$
For a lattice $L$ with a weight $2$ Hodge structure, we denote by $T(L)$ the induced transcendental lattice, defined as the smallest sub-Hodge structure containing $L^{2,0}$. Then there is an induced Hodge-isometry
\[ T(\Gamma_\lambda)\cong T(\Gamma_v). \]
Notice that, by definition, $T(\Gamma_\lambda)=T(\lambda^\perp)$ and $T(\Gamma_v)=T(v^\perp)=T(S)$, where the latter is the transcendental lattice of the K3 surface $S$. Now, the orthogonal complement $T(S)^\perp\subset\widetilde{\oH}(S,\Z)$ contains a unimodular hyperbolic plane $U$, namely the hyperbolic plane generated by $\oH^0(S,\Z)$ and $\oH^4(S,\Z)$. Moreover, by construction we have two primitive embeddings of $T(S)$:
\[ T(S)\hookrightarrow\widetilde{\oH}(S,\Z)\quad\mbox{ and }\quad T(S)\cong T(\lambda^\perp)\hookrightarrow\widetilde{\oH}(\mathcal{A}_V). \]
By \cite[Theorem~1.14.4]{Nik79}, there must exist an isometry \[ g\colon\widetilde{\oH}(\mathcal{A}_V)\longrightarrow \widetilde{\oH}(S,\Z) \]
that preserves the Hodge structures by construction. Then we can conclude as before by \cite[Proposition~1.13, Proposition~1.24]{Huybrechts:Gargnano}.
\end{proof}

\begin{remark}
Both the proposition and the theorem above should be compared with the derived Torelli theorems for (twisted) K3 surfaces. More precisely, two K3 surfaces are derived equivalent if and only if their Mukai lattices are Hodge-isometric if and only if their transcendental lattices are Hodge-isometric (\cite[Theorem~3.3]{Orlov}).

On the other hand, two twisted K3 surfaces are derived equivalent if and only if their Mukai lattices are orientation-preserving Hodge-isometric (\cite[Theorem~B]{Reineke} and \cite[Theorem~0.1]{HuySte}). Here the orientation chosen is the one with respect to the positive 4-space. In this case it seems no longer true that this condition is equivalent to the existence of an isometry between the transcendental lattices (cf.\ \cite[Remark~4.10]{HuySte}).
\end{remark}

\begin{example}\label{example:bir M}
When $V$ is a Pfaffian cubic fourfold, it is known that $V$ has an associated K3 surface $S$. In this case there is natural birational morphism from the manifold $\widetilde{M}_{2\lambda_1+2\lambda_2}(V,\tau)$ of Example~\ref{example:LPZ} and the O'Grady resolution $\widetilde{M}_{(2,0,-2)}(S,H)$, which we now recall. First of all, by \cite[Theorem~1.3]{PertusiCo}, the manifold $\widetilde{M}_{2\lambda_1+2\lambda_2}(V,\tau)$ is birational to a twisted intermediate jacobian fibration (see Section~\ref{section:LSV}). Moreover, by \cite[Example~4.3.6]{OnoPhD}, in this case the twisted intermediate jacobian fibration is isomorphic to the untwisted intermediate jacobian fibration constructed in \cite{LSV}. Finally, in \cite[Section~6]{LSV} the authors construct a birational map between the intermediate jacobian fibration and the manifold $\widetilde{M}_{(2,0,-2)}(S,H)$.

This birational map is explicitely known only at a general point of the smooth locus and its construction is rather difficult. We are not aware of any known description of this morphism at a general point of the singular locus. In particular, the question whether it is stratum preserving is open, and it would be very interesting to have an answer to it.  
\end{example}

\section{When is a LPZ manifold birational to a LSV manifold?}\label{section:LSV}

Let $Y$ be a smooth cubic threefold. The intermediate jacobian of $Y$ is defined as
$$ J_Y=\oH^{2,1}(Y)^*/\oH_3(Y,\Z),$$
where $\oH_3(Y,\Z)$ is included in $\oH^{2,1}(Y)^*$ via integration. $J_Y$ is a principally polarised abelian variety of dimension $5$ and it parametrises dimension 1 cycles on $Y$ that are homologically trivial.

For any $t\in\oH^4(Y,\Z)=\Z$, we denote by $J_Y^t$ the torsor parametrising cycles of homology class $t$. Notice that, up to canonical isomorphism, there exists only one non-trivial torsor, namely $J_Y^1$.

Let now $V$ be a smooth cubic fourfold and let us denote by $\mathcal{U}\subset\P\oH^0(V,\sO_V(1))^*$ the open subset parametrising smooth linear sections. Then there exist two fibrations
\begin{equation}\label{eqn:LSV open}
p\colon\sJ_{\sU}\to\sU\quad\mbox{ and }\quad p^t\colon\sJ_{\sU}^t\to\sU
\end{equation}
whose fibres are of the form $J_Y$ and $J_Y^t$, respectively.
\begin{theorem}[\protect{\cite{LSV,Voisin:Twisted,Sacca}}]\label{thm:Sacca}
There exist smooth and projective compactifications
$$ p\colon\operatorname{IJ}(V)\to(\P^5)^\vee\quad\mbox{ and }\quad  p^t\colon\operatorname{IJ}^t(V)\to(\P^5)^\vee $$
of the fibrations (\ref{eqn:LSV open}). Moreover, both $\operatorname{IJ}(V)$ and $\operatorname{IJ}^t(V)$ are projective irreducible holomorphic symplectic manifolds of type OG10, and both $p$ and $p^t$ are lagrangian fibrations. 
\end{theorem}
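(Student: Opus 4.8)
This is a deep result whose full proof occupies \cite{LSV,Sacca,Voisin:Twisted}; I describe the strategy I would follow, organised into the construction of the symplectic form over the smooth locus, the compactification across the discriminant, and the identification of the deformation type. The starting point is that the Hodge structure of the cubic fourfold $V$ has $\dim\oH^{3,1}(V)=1$. Transporting this class through the family of smooth hyperplane sections, the infinitesimal variation of Hodge structure of the intermediate Jacobians (the Donagi--Markman cubic) produces a canonical holomorphic symplectic form on the total space $\sJ_\sU$. By construction the abelian fibres $J_Y$, of dimension $5$, are isotropic, hence Lagrangian; and since $\sJ^t_\sU$ is a torsor under $\sJ_\sU$ it carries the same relative structure and inherits a symplectic form with Lagrangian fibres $J^t_Y$. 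This gives both fibrations over $\sU$ the desired local structure, and the first real task is to extend them over the discriminant $\Delta\subset(\P^5)^\vee$ parametrising singular cubic threefolds.

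\emph{Compactification across $\Delta$.} The idea is to replace each intermediate Jacobian by a modular compactification that behaves well in families. For a cubic threefold $Y$ one uses the identification of $J_Y$ with a Prym variety obtained from the conic-bundle structure given by projection from a line, or equivalently a relative compactified Jacobian attached to the universal family of cubic threefolds, which provides a projective (possibly singular) extension $\overline{\sJ}\to(\P^5)^\vee$. Over the generic point of $\Delta$, where $Y$ acquires a single node, the intermediate Jacobian degenerates to a rank-one semi-abelian variety whose compactified Jacobian is a mild degeneration; I would show that the total space is smooth in codimension one and that the symplectic form extends by a Hartogs-type argument. Following \cite{LSV} one then shows that, after discarding a closed subset $Z\subset(\P^5)^\vee$ of codimension $\ge 2$, the resulting family is smooth and symplectic, and extends the $2$-form across $Z$ precisely because its complement has codimension $\ge 2$ in a smooth total space.

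\emph{Global smoothness, projectivity and deformation type.} Promoting this partial compactification to a genuinely smooth projective model over the whole base is the crux, since one must control the degenerations of $J_Y$ over the deeper strata of $\Delta$, where $Y$ has worse singularities and the naive compactified Jacobian is singular. Here I would invoke the refined analysis of \cite{Sacca}, which handles every smooth $V$, together with a relative polarisation to guarantee projectivity. To identify the deformation type I would degenerate $V$ to a special (e.g.\ Pfaffian or one-nodal) cubic fourfold for which the fibration becomes birational to a known example of type OG10; alternatively, one computes $b_2=24$ and invokes the classification of the known deformation classes of irreducible holomorphic symplectic manifolds, for which $b_2=24$ forces OG10. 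The Lagrangian property of $p$ and $p^t$ then follows fibrewise from the isotropy established over $\sU$, extended by continuity. For the twisted fibration $\sJ^t_\sU$, which has no zero section, the compactification $\operatorname{IJ}^t(V)$ is carried out in \cite{Voisin:Twisted}; I would either rerun the program keeping track of the torsor structure, or deform $\operatorname{IJ}(V)$ to $\operatorname{IJ}^t(V)$ to transport smoothness and the OG10 deformation type.

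\emph{Main obstacle.} The principal difficulty is the compactification across the discriminant, and in particular proving that a \emph{smooth} projective total space exists over all of $(\P^5)^\vee$. Controlling the degenerate fibres over the deep strata of $\Delta$—where the intermediate Jacobian acquires boundary and the compactified Jacobian is no longer smooth—requires the delicate birational and deformation-theoretic arguments of \cite{Sacca}, and establishing smoothness and the OG10 deformation type simultaneously is the heart of the matter.
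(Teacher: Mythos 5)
The paper offers no proof of this statement: it is imported verbatim from \cite{LSV}, \cite{Voisin:Twisted} and \cite{Sacca}, so there is nothing internal to compare your argument against. Your sketch is nonetheless a faithful outline of the strategy actually used in those references: the symplectic form on $\sJ_\sU$ from the Donagi--Markman cubic using $h^{3,1}(V)=1$, the extension over the one-nodal locus followed by a codimension-two/Hartogs argument, Sacc\`a's MMP-based refinement to handle every smooth $V$, the identification of the deformation type via the Pfaffian cubic (where the fibration is birational to O'Grady's $\widetilde{M}_{(2,0,-2)}(S,H)$, cf.\ Example~\ref{example:bir M}), and Voisin's adaptation to the torsor $\sJ^t_\sU$. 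One caveat: your alternative route to the deformation type --- computing $b_2=24$ and ``invoking the classification'' --- is not a proof, since no classification of irreducible holomorphic symplectic manifolds exists; only the Pfaffian degeneration argument (plus Huybrechts' theorem that birational hyper-K\"ahler manifolds are deformation equivalent) actually closes that step.
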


Varieties isomorphic to $\operatorname{IJ}(V)$ are called \emph{LSV varieties}; varieties isomorphic to $\operatorname{IJ}^t(V)$ are called \emph{twisted LSV varieties}.

\begin{remark}
Theorem~\ref{thm:Sacca} is an existence result. It is known that when $V$ is very general (in the sense of Hassett), then the compactifications in Theorem~\ref{thm:Sacca} are unique, but this may not longer be true for special cubic fourfolds. In \cite[Section~6]{MO} it is proved that in the twisted case there exists only one compactification that is a lagrangian fibration, under the additional condition that the fibres are irreducible. (In reference to Example~\ref{example:LPZ}, the cases where the stability condition $\bar{\tau}$ is conjecturally non-generic correspond to the cases where the compactifications of the twisted intermediate jacobian have reducible fibres.)

Even though there may be several compactifications, 
by construction all the compactifications are birational to each other. Since we are interested in the birational class of LPZ varieties, we can safely ignore this lack of uniqueness.
\end{remark}




We retain the notation from the previous section, so that $\mathcal{C}$ denotes the moduli space of smooth cubic fourfolds and $\mathcal{C}_d$ denotes the Hassett divisor of special cubic fourfolds with discriminant $d$.
\begin{theorem}\label{thm:LPZ is LSV}
For any smooth cubic fourfold $V$, we consider the desingularised LPZ variety $\widetilde{M}_\lambda(V,\tau)$, where $\lambda=2\lambda_1+2\lambda_2$ and $\tau$ is $\lambda$-generic (cf.\ Example~\ref{example:LPZ}). Then the following statements are equivalent:
\begin{enumerate}
    \item there exists a birational isomorphism between $\widetilde{M}_\lambda(V,\tau)$ and $\operatorname{IJ}(V)$;
    \item $V\in\mathcal{C}_d$ with $d>6$ and $d\equiv 2\pmod{6}$.
\end{enumerate}
\end{theorem}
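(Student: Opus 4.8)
The plan is to reduce the statement to a comparison of periods of manifolds of type OG10 and then to a lattice computation. The starting point is that the desingularised LPZ variety $\widetilde{M}_\lambda(V,\tau)$ with $\lambda=2\lambda_1+2\lambda_2$ is always birational to the twisted LSV variety $\operatorname{IJ}^t(V)$, by \cite[Theorem~1.3]{PertusiCo} (cf.\ Example~\ref{example:LPZ}). Hence condition (1) is equivalent to the existence of a birational map $\operatorname{IJ}^t(V)\dashrightarrow\operatorname{IJ}(V)$. Both $\operatorname{IJ}(V)$ and $\operatorname{IJ}^t(V)$ are irreducible holomorphic symplectic manifolds of type OG10 whose transcendental Hodge structure is that of $V$, and their transcendental lattices realise the same Hodge structure. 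By the Global Torelli theorem for manifolds of type OG10 (\cite{Ono}), the two are birational if and only if there is a parallel-transport Hodge-isometry $\oH^2(\operatorname{IJ}^t(V),\Z)\cong\oH^2(\operatorname{IJ}(V),\Z)$, so I would first reduce (1) to the existence of such a Hodge-isometry.

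The second step is to translate this into a condition on the Picard lattices via Nikulin's theory of primitive embeddings \cite{Nik79}. Since the transcendental parts agree, the Hodge-isometry exists precisely when the two primitive embeddings of the common transcendental lattice into the fixed OG10 lattice $U^{\oplus3}\oplus E_8(-1)^{\oplus2}\oplus A_2(-1)$ lie in the same orbit; the ranks and discriminant forms are small enough that the embeddings are unique in their genus, so this holds if and only if the orthogonal complements, i.e.\ the Picard lattices, are isometric. Here I would use the computations of \cite{MO}: for a very general cubic fourfold one has $\Pic(\operatorname{IJ}^t(V))\cong U(3)$ (as recalled in Example~\ref{example:Pic}), while $\Pic(\operatorname{IJ}(V))\cong U$ for the untwisted fibration, which carries a section. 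The difference between the unimodular hyperbolic plane and its scaling reflects the presence, respectively absence, of a section.

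The heart of the argument is the lattice computation for $V\in\mathcal{C}_d$. When $V$ is special the new algebraic class is the labelling class $w_d\in\oH^4(V,\Z)_{\operatorname{prim}}$, and the theory of Hassett divisors (\cite{Hassett}) gives the dictionary
\[ d\equiv 2\pmod{6}\ \Longleftrightarrow\ \operatorname{div}(w_d)=3,\ w_d^2=3d, \qquad d\equiv 0\pmod{6}\ \Longleftrightarrow\ \operatorname{div}(w_d)=1,\ w_d^2=d/3. \]
On the untwisted side the unimodular plane splits off $\oH^4(V,\Z)_{\operatorname{prim}}$, so $\Pic(\operatorname{IJ}(V))\cong U\oplus\langle w_d^2\rangle$ with no further gluing. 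On the twisted side one must saturate $U(3)\oplus\langle w_d\rangle$ inside the OG10 lattice, using the index-$3$ gluing between the discriminant of $\oH^4(V,\Z)_{\operatorname{prim}}$ and that of $U(3)=\langle e,f\rangle$. If $\operatorname{div}(w_d)=3$ (that is, $d\equiv 2\pmod{6}$) the glue vector involves $w_d$, and adjoining a suitable vector of the form $\tfrac13(e-f+w_d)$ produces a unimodular hyperbolic plane; a short computation then shows $\Pic(\operatorname{IJ}^t(V))\cong U\oplus\langle 3d\rangle\cong\Pic(\operatorname{IJ}(V))$. If instead $\operatorname{div}(w_d)=1$ (that is, $d\equiv 0\pmod{6}$), then $w_d$ is orthogonal to the gluing, $\Pic(\operatorname{IJ}^t(V))\cong U(3)\oplus\langle d/3\rangle$, and its determinant $-3d$ differs from the determinant $-d/3$ of $\Pic(\operatorname{IJ}(V))$, so the two cannot be isometric. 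This yields $(2)\Rightarrow(1)$, and conversely birationality forces the Picard lattices to be isometric, hence $\oH^{2,2}(V,\Z)_{\operatorname{prim}}$ to contain a class of divisibility $3$, giving $(1)\Rightarrow(2)$.

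The main obstacle I expect is the final passage from an abstract lattice isometry to an honest birational map: one must check that the Hodge-isometry produced can be taken inside the monodromy group $\Mon^2$ of OG10 (controlling orientation and the action on the discriminant group $\Z/3$), and that it can be chosen to carry the positive and movable cone of one manifold into that of the other, possibly after composing with reflections in the exceptional and wall divisors. This is exactly the Torelli-theoretic input already used in the proof of Proposition~\ref{thm:main}, and it is where projectivity of the two varieties and the explicit description of $\Mon^2$ for OG10 enter. A secondary point to handle is the case of cubics lying on several Hassett divisors, where one argues with the full lattice $\oH^{2,2}(V,\Z)_{\operatorname{prim}}$ and only needs the existence of a single divisibility-$3$ class to run the argument.
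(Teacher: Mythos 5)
Your argument for $(1)\Rightarrow(2)$ is essentially the paper's: a birational map forces $\Pic(\widetilde{M}_\lambda(V,\tau))$ to contain a unimodular hyperbolic plane coming from $\Pic(\operatorname{IJ}(V))$, and since $\Pic(\widetilde{M}_\lambda(V,\tau))$ is generated by a primitively embedded $U(3)$ together with the negative definite lattice $\oH^{2,2}(V,\Z)_{\operatorname{prim}}$, this forces a class of divisibility $3$ in $\oH^{2,2}(V,\Z)_{\operatorname{prim}}$ and hence $d\equiv 2\pmod 6$ by Hassett. (The paper only needs ``contains a unimodular $U$'', not a full isometry of Picard lattices, but the idea is the same.)

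For $(2)\Rightarrow(1)$ you take a genuinely different, purely lattice-theoretic route, and this is where there is a real gap. The paper argues geometrically: for $d\equiv2\pmod6$ one picks an algebraic surface $T\subset V$ with $\langle h^2,[T]\rangle$ of discriminant $d$; writing $d=3x-y^2$ with $y=h^2.[T]$, the congruence forces $y\not\equiv0\pmod 3$, so $T$ meets each smooth hyperplane section in a $1$-cycle of degree prime to $3$, and this cycle trivialises the torsor $\sJ^t_{\sU}\to\sU$, giving an isomorphism $\sJ^t_{\sU}\cong\sJ_{\sU}$ and hence a birational map $\operatorname{IJ}^t(V)\dashrightarrow\operatorname{IJ}(V)$. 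You instead want to produce a Hodge isometry $\oH^2(\operatorname{IJ}^t(V),\Z)\cong\oH^2(\operatorname{IJ}(V),\Z)$ from the Hodge isometry of transcendental lattices together with an abstract isometry of Picard lattices, and then invoke Global Torelli. The load-bearing claim is your assertion that the primitive embedding of the common transcendental lattice into the OG10 lattice $U^{\oplus3}\oplus E_8(-1)^{\oplus2}\oplus A_2(-1)$ is unique up to the action of the orthogonal group. That ambient lattice is not unimodular, so this is not Nikulin's Theorem~1.14.4 (which the paper can use in Theorem~\ref{thm:qualsiasi} precisely because there the ambient lattice is the unimodular Mukai lattice of a K3); one would have to run \cite[Proposition~1.15.1]{Nik79} and control the orbits of the gluing data between discriminant groups, and an abstract isometry of the orthogonal complements does not by itself guarantee compatibility with the glue. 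Until this is verified, $(2)\Rightarrow(1)$ is not established. A secondary weakness is that your Picard-lattice computation assumes $V$ very general in $\mathcal{C}_d$, whereas the statement concerns every $V\in\mathcal{C}_d$; the torsor-trivialisation argument applies uniformly and avoids both issues.
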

\proof
First of all, by \cite[Corollary~3.10]{Sacca} (see also \cite[Example~3.3]{MO}), it is known that if $V$ is very general (in the sense of Hassett), then the twisted LSV manifold is not birational to the untwisted one. Since by \cite[Theorem~1.3]{PertusiCo} the manifold $\widetilde{M}_\lambda(V,\tau)$ is always birational to $\operatorname{IJ}^t(V)$, in order to have any birational isomorphism between $\widetilde{M}_\lambda(V,\tau)$ and $\operatorname{IJ}(V)$, the cubic fourfold $V$ must be special, i.e.\ $V\in\mathcal{C}_d$ with $d>6$ and $d\equiv 0,2\pmod{6}$.

Let us assume that $V\in\mathcal{C}_d$ with $d>6$ and $d\equiv 2\pmod{6}$.
In particular this is equivalent to say that $d>6$ is even and $d\equiv 2\pmod{3}$. Let $T\subset V$ be an algebraic two-dimensional cycle (not homologous to a complete intersection) such that the discriminant of the lattice generated by the cohomology class $[T]$ and $h^2$ is $d$. (Notice that such a cycle $T$ exists since $V\in\mathcal{C}_d.)$ Here $h$ is the class of an hyperplane section of $V$ and $h^2$ is the corresponding class in $\operatorname{H}^4(V,\Z)$. If we set $x=[T]^2$ and $y=h^2.[T]$, then $d=3x-y^2$. By hypothesis we must have $y\equiv\pm1\pmod{3}$; in other words, the intersection of $T$ with a smooth linear section of $V$ is a cycle of degree not a multiple of $3$. We can then use $T$ to construct a trivialisation of the torsor $p^t\colon\sJ_{\sU}^t\to\sU$. 
In particular we get an isomorphism between $\sJ_{\sU}^t$ and $\sJ_{\sU}$, which implies that the varieties $\operatorname{IJ}^t(V)$ and $\operatorname{IJ}(V)$ are birational, so that also $\widetilde{M}_\lambda(V,\tau)$ is birational to $\operatorname{IJ}(V)$. 

Let us now prove the other implication. 
Assume that $\widetilde{M}_\lambda(V,\tau)$ is birational to an LSV variety $\operatorname{IJ}(V)$, so that there is an isometry 
\[ \Pic(\widetilde{M}_\lambda(V,\tau))\cong\Pic(\operatorname{IJ}(V)). \]
Since, by \cite[Proposition~4.1]{Ono}, $\Pic(\operatorname{IJ}(V))$ contains a unimodular hyperbolic plane, also $\Pic(\widetilde{M}_\lambda(V,\tau))$ contains a unimodular hyperbolic plane.

On the other hand, as we noticed in Example~\ref{example:Pic}, we have
\[ \Pic(\widetilde{M}_\lambda(V,\tau))=\langle U(3),\operatorname{H}^{2,2}(V,\Z)_{\operatorname{prim}}\rangle, \]
where $U(3)$ is the primitive sublattice generated by the exceptional divisor $\widetilde{\Sigma}$ and the movable isotropic class of the lagrangian fibration induced on $\widetilde{M}_\lambda(V,\tau)$ by \cite[Theorem~1.3]{PertusiCo}.

Since the lattice $U(3)$ is primitive in $\Pic(\widetilde{M}_\lambda(V,\tau))$ and $\operatorname{H}^{2,2}(V,\Z)_{\operatorname{prim}}$ is negative definite, the only way for $\Pic(\widetilde{M}_\lambda(V,\tau))$ to contain a hyperbolic plane is that $\operatorname{H}^{2,2}(V,\Z)_{\operatorname{prim}}$ contains at least a class of divisibility $3$.
By \cite[Proposition~3.2.2]{Hassett} this can happen only if $V\in\mathcal{C}_d$ with $d\equiv 2\pmod{6}$.
\endproof

\begin{example}
If $V$ is a Pfaffian cubic fourfold, so that $V\in\mathcal{C}_{14}$, as already remarked in \cite[Example~4.3.6]{OnoPhD} $\operatorname{IJ}(V)$ and $\operatorname{IJ}^t(V)$ are isomorphic. In particular the manifold $\widetilde{M}_{2\lambda_1+2\lambda_2}(V,\tau)$ is birational to $\operatorname{IJ}(V)$ and moreover
\[
\Pic(\widetilde{M}_{2\lambda_1+2\lambda_2}(V,\tau))=U\oplus\langle D\rangle, 
\]
where $D$ is a class of square $-42$ and divisibility $3$.
\end{example}

\begin{example}\label{example:bah}
    More generally let us take a cubic fourfold $V\in\mathcal{C}_d$ with $d>6$ and $d\equiv 2\pmod{6}$, and let us assume that $V$ is general in $\mathcal{C}_d$. In particular $\oH^{2,2}(V,\Z)_{\operatorname{prim}}=\Z D$ where $D^2=-3d$ and $\operatorname{div}(D)=3$ (see \cite[Proposition~3.2.2]{Hassett}). Since $d$ is always even, let us write $d=2k$, so that $D^2=-6k$.

    In this case we have that
    \[ \Pic(\widetilde{M}_\lambda(V,\tau))=\langle \bar{e},\bar{f},D\rangle, \]
    where $\bar{e}$ and $\bar{f}$ are the standard basis of $U(3)$. The rank $3$ lattice generated by these three classes has the following Gram matrix
    \[
    \begin{pmatrix}
     0 & 3 & 0\\
     3 & 0 & 0\\
     0 & 0 & -6k,
     \end{pmatrix} \]
     so that it is easy to see that 
     \[ A:=\frac{\bar{e}+k\bar{f}+D}{3}\in\Pic(\widetilde{M}_\lambda(V,\tau)). \]
     Notice that $A^2=0$, $A.\bar{f}=1$ and $A.D=-2k$, so that we eventually get that the rank $2$ lattice generated by $A$ and $\bar{f}$ is the unimodular hyperbolic plane and moreover the class $Z=D+2k\bar{f}$ is ortogonal to it (and it has divisibility $3$). Eventually we get that
     \[ \Pic(\widetilde{M}_\lambda(V,\tau))=\langle A,\bar{f},Z\rangle=
     \begin{pmatrix}
     0 & 1 & 0\\
     1 & 0 & 0\\
     0 & 0 & -6k.
     \end{pmatrix} \]

     Vice versa, if $\operatorname{div}(D)=1$, then there cannot exist any isometric embedding of $U$ in $\Pic(\widetilde{M}_\lambda(V,\tau))$. In fact if $x\in\oH^2(\widetilde{M}_\lambda(V,\tau),\Z)$ is a class orthogonal to $U(3)$ and such that $D.x=1$, then we will have $A.x=1/3$, which is absurd.
\end{example}




\end{document}